\definecolor{my_color}{rgb}{0,0.5,0.5}
\definecolor{MIXT}{rgb}{0.4,0.3,0.6}
\numberwithin{equation}{section}
\font\tencyr=wncyr10 %scaled \magstephalf
\def\rus{\tencyr\cyracc}
\newtheorem{thm}{Theorem}[section]
\newtheorem{lm}[thm]{Lemma}%[chapter]
\newtheorem{prop}[thm]{Proposition}%[chapter]
\theoremstyle{remark}
\newtheorem{rmk}[thm]{Remark}
\theoremstyle{definition}
\newtheorem{ex}[thm]{Example}
\newtheorem{df}{Definition}
\newtheorem*{rema}{Remark}
\newenvironment{proof*}
{\noindent {\sl Proof.}\quad }{\hfill $\square$}
\newcommand {\ah}{{\mathfrak a}}
\newcommand {\be}{{\mathfrak b}}
\newcommand {\ce}{{\mathfrak c}}
\newcommand {\g}{{\mathfrak g}}
\newcommand {\gH}{{\eus H}}
\newcommand {\te}{{\mathfrak t}}
\newcommand {\ut}{{\mathfrak u}}
\newcommand {\sono}{{\mathfrak{so}}_{2n+1}}
\newcommand {\sone}{{\mathfrak{so}}_{2n}}
\newcommand {\gG}{{\eus A}}
\newcommand {\esi}{\varepsilon}
\newcommand {\ap}{\alpha}
\newcommand {\HW}{\widehat W}
\newcommand {\BC}{{\mathbb C}}
\newcommand {\hot}{{\mathsf{ht}}}
\newcommand {\rk}{{\mathsf{rk\,}}}
\newcommand {\rt}{{\mathsf{rt}}}
\newcommand {\supp}{{\mathsf{supp}}}
\newcommand {\GR}[2]{{\textrm{{\bf #1}}}_{#2}}
\newcommand {\Ab}{\mathfrak{Ab}}
\newcommand {\Abo}{\mathfrak{Ab}^o}%{\overset{o}{\mathfrak{Ab}}}
\newcommand {\beq}{\begin{equation}}
\newcommand {\eeq}{\end{equation}}
\newcommand{\curge}{\succcurlyeq}
\newcommand{\curle}{\preccurlyeq}
\renewcommand{\le}{\leqslant}
\renewcommand{\ge}{\geqslant}
\newenvironment{E6}[6]{%    modified!!
{\footnotesize\begin{tabular}{@{}c@{}}
{#1}-{#2}-\lower3.5ex\vbox{\hbox{{#3}\rule{0ex}{2.7ex}}
\hbox{\hspace{0.3ex}\rule{.2ex}{.8ex}\rule{0ex}{1.4ex}}\hbox{{#6}\strut}}-{#4}-{#5}
\end{tabular}}}
\newenvironment{E7}[7]{%   modified!!
{\footnotesize\begin{tabular}{@{}c@{}}
{#1}-{#2}-{#3}-\lower3.5ex\vbox{\hbox{{#4}\rule{0ex}{2.7ex}}
\hbox{\hspace{0.3ex}\rule{.2ex}{.8ex}\rule{0ex}{1.3ex}}\hbox{{#7}\strut}}-{#5}-{#6}
\end{tabular}}}
\newenvironment{E8}[8]{%
{\footnotesize\begin{tabular}{@{}c@{}}
{#1}-{#2}-{#3}-{#4}-\lower3.5ex\vbox{\hbox{{#5}\rule{0ex}{2.6ex}}
\hbox{\hspace{0.4ex}\rule{.2ex}{.8ex}\rule{0ex}{1.3ex}}\hbox{{#8}\strut}}-{#6}-{#7}
\end{tabular}}}
\newcommand{\eus}{\EuScript}
\begin{document}
\setlength{\parskip}{2pt plus 4pt minus 0pt}
\hfill {\scriptsize November 9, 2017} 
\vskip1.5ex

\title[Abelian ideals and amazing roots]{Abelian ideals and amazing roots}
\author{Dmitri I. Panyushev}
\address[]{Institute for Information Transmission Problems of the R.A.S., Bolshoi Karetnyi per. 19,  
127051 Moscow,  Russia}
\email{panyushev@iitp.ru}
\keywords{Root system, Borel subalgebra, abelian ideal}
\subjclass[2010]{17B20, 17B22, 06A07, 20F55}
\thanks{This research was carried out at the IITP RAS at the expense of the Russian Foundation for Sciences (project {\rus N0} 14-50-00150).}
\begin{abstract}

Let $\g$ be a simple Lie algebra with a Borel subalgebra $\be$. To any long positive 
root $\gamma$, one associates two ideals of $\be$: the abelian ideal $I(\gamma)_{\max}$ and
not necessarily abelian ideal $I\langle{\curge}\gamma\rangle$. It is known that 
$I(\gamma)_{\max} \subset I\langle{\curge}\gamma\rangle$,  and $\gamma$ is said to be amazing if
the equality holds. The set of amazing roots, $\gG$, is closed under the operation `$\vee$' in $\Delta^+$, 
and $\gamma\in\gG$ is said to be primitive, if it cannot be written as $\gamma_1\vee\gamma_2$
with incomparable amazing roots $\gamma_1,\gamma_2$. We classify the amazing roots and notice that 
the number of primitive roots equals $\rk\g$. Moreover, if $\Pi$ (resp. $\gG_{\sf pr}$)
is the set of simple (resp. primitive) roots, then there is a natural bijection $\Pi\longleftrightarrow
\gG_{\sf pr}$. We also study the subset $\gG_\gH$  of  $\gG$.
\end{abstract}
\maketitle

\section*{Introduction}

\noindent
Let $\g$ be a simple Lie algebra over $\BC$, with a  triangular decomposition 
$\g=\ut\oplus\te\oplus \ut^-$, where $\te$ is a Cartan subalgebra and $\be=\ut\oplus\te$ is a fixed 
Borel subalgebra.  
The general theory of abelian ideals of $\be$ is built on their relationship with the so-called 
{\it minuscule elements\/} of the affine Weyl group $\HW$, which is due to D.~Peterson (see Kostant's 
account in~\cite{ko98}).  The subsequent development has lead to spectacular results of 
combinatorial and representation-theoretic nature, see e.g.~\cite{cp1,cp3,ko04,imrn,jems,som05,suter}. 
In this note, we consider some combinatorial consequences.

Let $\Delta$ be the root system of $(\g,\te)$, $\Delta^+$ the set of positive roots corresponding to $\ut$,  
$\Pi$ the set of simple roots in $\Delta^+$, and $\theta$  the {highest root} in  $\Delta^+$. Then $W$ is the Weyl group and $\g_\gamma$ is the root space for $\gamma\in\Delta$.
We write $\Ab$ for the set of all abelian ideals of $\be$ and 
think of $\Ab$ as poset with respect to inclusion. 
If $\ah\in\Ab$, then $\ah\subset \ut$, and thereby it is a sum of certain root spaces. Therefore,
we identify $\ah$ with the corresponding subset $I$ of $\Delta^+$.

Let $\Abo$ denote the set of nonzero abelian ideals and $\Delta^+_l$  the set
of long positive roots.  In the {\sf A-D-E} case, all roots are assumed to be long.
In~\cite[Sect.\,2]{imrn}, we defined a surjective mapping $\tau: \Abo \to \Delta^+_l$ and studied its fibres. 
If  $\tau(\ah)=\mu$, then $\mu\in\Delta^+_l$ is called the {\it rootlet\/} of $\ah$, also denoted $\rt(\ah)$ or 
$\rt(I)$.  
Letting $\Ab_\mu=\tau^{-1}(\mu)$, we get a  partition of $\Abo$ parameterised by
$\Delta^+_l$. Each $\Ab_\mu$ is  a subposet of $\Ab$ and, moreover, 
$\Ab_\mu$ has a unique minimal and unique maximal element~\cite[Sect.\,3]{imrn}.  
%These are denoted by $\ah(\mu)_{min}$ and $\ah(\mu)_{max}$, respectively. 
The corresponding subsets of $\Delta^+$ are denoted by
$I(\mu)_{\min}$ and $I(\mu)_{\max}$. 
We equip  $\Delta^+$  with the usual partial ordering `$\curge$'. It is proved in~\cite[Appendix]{jems} that,
for any $\mu_1,\mu_2\in \Delta^+$, there is a unique minimal root $\mu$ such that 
$\mu\curge \mu_i$, $i=1,2$. This root $\mu$ is denoted by $\mu_1\vee \mu_2$.
 \\ \indent 
Set $I\langle{\curge}\mu\rangle=\{\gamma\in\Delta^+\mid \gamma\curge\mu\}$. 
Then $I(\mu)_{\max}\subset I\langle{\curge}\mu\rangle$ for any 
$\mu\in\Delta^+_l$, see~\cite[Theorem\,3.5]{jems}.
In this note, we study the (long) positive roots having the property that
$I(\mu)_{\max}= I\langle{\curge}\mu\rangle$. Such roots are said to be {\it amazing}. Let 
$\gG=\gG(\Delta)$ be the set of all amazing roots. We prove that $\gG\subset\Delta^+$ is closed under  
`$\vee$'. Let us say that $\gamma\in\gG$ is {\it primitive}, if it cannot be obtained as $\gamma_1\vee\gamma_2$, where $\gamma_1$ and $\gamma_2$ are incomparable {\bf amazing} roots.
(`{\it Incomparable}' means that
$\gamma_1\not\curle \gamma_2$ and $\gamma_2\not\curle \gamma_1$.) 
Write $\gG_\mathsf{pr}$ %=\gG_\mathsf{pr}(\Delta)$ 
for the set of all primitive roots. We obtain the 
classification of the amazing roots (Theorem~\ref{thm:classif}) and then make some surprising 
observations:

\textbullet\quad  
$\#\gG_\mathsf{pr}=\#\Pi$ and there is a natural bijection
$\gG_\mathsf{pr}\longleftrightarrow \Pi$ (Theorem~\ref{thm:bij1}). 
\\ Unfortunately, I have no conceptual explanation, but I believe there must be one.
\\ \indent 
\textbullet\quad  
Consider $\gH=\{\gamma\in \Delta^+ \mid (\gamma, \theta)\ne 0\}$ and $\gG_{\gH}=\gG\cap\gH$. 
Suppose that $\theta$ is {\bf fundamental}. We 
prove that  $\#\gG_{\gH}=\# \Pi_l+1$,  $\gG_{\gH}$ is a subposet of $\Delta^+$, and the Hasse diagram 
of $\gG_{\gH}$ is a tree that is isomorphic to the {\bf extended} Dynkin diagram with deleted short simple roots. (In particular, for
$\GR{D}{n}$, and $\GR{E}{n}$, one obtains the full extended Dynkin diagram.) There is also
a natural bijection between $\Pi_l$ and the edges of $\gG_{\gH}$ (Theorem~\ref{thm:bij2}).
\\ \indent 
\textbullet\quad   
For $\Delta$ of type $\GR{B}{n},\GR{F}{4}$, and $\GR{G}{2}$,  one can 
add to $\gG_\gH$ some {\bf short} roots from $\gH$ so that the larger subposet,
$\widetilde{\gG_\gH}$, will represent the full extended Dynkin diagram. This also yields a bijection between $\Pi$ and the edges of $\widetilde{\gG_\gH}$.
In all cases, the vertex $\{\theta\}$ in $\gG_\gH$  or $\widetilde{\gG_\gH}$ corresponds to the extra node in the extended Dynkin 
diagram.

%%%%%%%%%%%%%%%%   
\section{Preliminaries}    \label{sect:1}

\noindent
The partial ordering `$\curle$' in $\Delta^+$ is defined by the rule  that $\mu\curle\nu$ if 
$\nu-\mu$ is a non-negative integral linear combination of simple roots. 
Any $\be$-stable subspace $\ce\subset \ut$ is a sum of certain root spaces in $\ut$,  i.e.,
$\ce=\bigoplus_{\gamma\in I_\ce}\g_\gamma$.  The relation $[\be,\ce]\subset \ce$ is equivalent to
that $I=I_\ce$ is an {\it upper ideal\/} of the poset $(\Delta^+, \curle)$, i.e., 
if $\nu\in I$, $\gamma\in\Delta^+$, and $\nu\curle \gamma$, then $\gamma\in I$.
We only work in the combinatorial setting, so that a $\be$-ideal $\ce\subset\ut$
is being identified with the corresponding upper ideal $I$ of $\Delta^+$. 
The property of being abelian additionally means that
$\gamma'+\gamma''\not\in \Delta^+$ for all $\gamma',\gamma''\in I$.

Now, we have $\Pi=\{\ap_1,\dots,\ap_n\}$, the vector space $V=\oplus_{i=1}^n{\mathbb R}\ap_i$, 
the  Weyl group $W$ generated by  simple reflections $s_1,\dots,s_n$,  and a $W$-invariant inner 
product $(\ ,\ )$ on $V$. If $\mu=\sum_{i=1}^n c_i\ap_i\in\Delta^+$, then $\hot(\mu):=\sum c_i$ and 
$[\mu:\ap_i]:=c_i$. The support of $\mu$ is $\supp(\mu)=\{\ap\in\Pi\mid [\mu:\ap]\ne 0\}$. As is 
well known, $\supp(\mu)$ is a connected subset on the Dynkin diagram.
Given $\mu_1$ and $\mu_2$, the root 
$\mu_1\vee \mu_2$ is explicitly computed as follows (see~\cite{jems}):
\\ \indent
$(\diamond_1)$ \ If $(\ap,\beta)=0$ for all $\ap\in \supp(\mu_1)$ and $\beta\in\supp(\mu_2)$ (the case of disjoint supports), 
then $\mu_1\vee \mu_2=\mu_1+\mu_2+\sum_{\ap\in C}\ap$, where $C$ is the unique chain on the Dynkin diagram connecting both supports;
\\ \indent
$(\diamond_2)$ \ In all other cases, $[\mu_1\vee \mu_2:\ap]=\max\{[\mu_1:\ap],[\mu_2:\ap]\}$ for any $\ap\in\Pi$.

The mapping $\tau: \Abo\to \Delta^+_l$ is defined via the use of the minuscule element of $\widehat W$
corresponding to $I\in \Abo$. We refer to \cite[Sect.\,2]{imrn} for details.
By \cite[Prop.\,2.5]{imrn}, this map is surjective.
%we have $\tau(I)\in \Delta^+_l$ and every $\mu\in\Delta^+_l$ occurs in this way.

The Heisenberg set $\gH:=\{\gamma\in \Delta^+ \mid (\gamma, \theta)\ne 0\}$ plays a prominent role in the theory: 

\textbullet\quad  $\#\tau^{-1}(\mu)=1$ (i.e., $I(\mu)_{\min}=I(\mu)_{\max}$) if and only if
%$(\theta,\mu)\ne 0$, i.e. 
$\mu\in\gH$~\cite[Theorem\,5.1]{imrn}.

\textbullet\quad   $I=I(\mu)_{\min}$ for some $\mu$  if and only if $I\subset
     \gH$~\cite[Theorem\,4.3]{imrn};

\textbullet\quad  For $I\in\Abo$, we have $I\in\Ab_\mu$ if and only if $I\cap\gH=I(\mu)_{\min}\cap\gH$~\cite[Prop.\,3.2]{jems}.

\noindent
For any $\gamma\in\Delta^+$, set $I\langle{\curge}\gamma\rangle=\{\nu\in\Delta^+\mid
\nu\curge\gamma\}$. 
The upper ideal  $I\langle{\curge}\gamma\rangle$ is not necessarily abelian, and  $\gamma$ is said to be 
{\it commutative}, if it is abelian. Write $\Delta^+_{\sf com}$ for the set of all commutative roots.
This notion was introduced in \cite{jac06}, and the set  $\Delta^+_{\sf com}$ for any irreducible root
system is described in~\cite[Theorem\,4.4]{jac06}.

%%%%%%%%%%%%%%%%%%%%%%%%%%%%%%%%%
\section{Some properties of amazing roots}   \label{sect:2}

\begin{df}  \label{def:glor}
We say that $\gamma\in \Delta^+_l$ is {\it amazing}, if $I(\gamma)_{max}=I\langle{\curge}\gamma\rangle$.
The set of amazing roots is denoted by $\gG$.
\end{df}
It follows that $\gG\subset \Delta^+_{\sf com}$.
Our goal is to classify the amazing roots.  Since $\theta\in I(\theta)_{\min}\subset I(\theta)_{\max}
\subset I\langle{\curge}\theta\rangle=\{\theta\}$, we see that $\theta\in \gG$.
\begin{lm}    \label{lm:1}
If $\gamma_1,\gamma_2\in \gG$, then $\gamma_1\vee\gamma_2\in\gG$.
\end{lm}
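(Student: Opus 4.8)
The plan is to set $\gamma:=\gamma_1\vee\gamma_2$, $J:=I\langle{\curge}\gamma\rangle$, and to prove the lemma by showing that $J$ is an abelian ideal whose rootlet is $\gamma$. Granting that, $J$ lies in $\Ab_\gamma$, so $J\subseteq I(\gamma)_{\max}$; on the other hand $I(\gamma)_{\max}\subseteq I\langle{\curge}\gamma\rangle=J$ by~\cite[Theorem\,3.5]{jems} (legitimate because $\gamma$, being a rootlet, will turn out to be long). The sandwich $J\subseteq I(\gamma)_{\max}\subseteq J$ then forces $I(\gamma)_{\max}=I\langle{\curge}\gamma\rangle$, i.e. $\gamma\in\gG$. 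To see that $J$ is abelian I would first use that $\gamma_1\vee\gamma_2$ is the join of $\gamma_1,\gamma_2$ in $(\Delta^+,\curge)$ (\cite{jems}): since $\nu\curge\gamma_1$ and $\nu\curge\gamma_2$ hold simultaneously iff $\nu\curge\gamma_1\vee\gamma_2$, one gets
\[
J=I\langle{\curge}\gamma_1\rangle\cap I\langle{\curge}\gamma_2\rangle .
\]
As $\gamma_1,\gamma_2$ are amazing, $I\langle{\curge}\gamma_i\rangle=I(\gamma_i)_{\max}$ is an abelian ideal, and an intersection of abelian ideals is again one (the intersection of upper ideals is an upper ideal, and no two roots of the abelian $I(\gamma_1)_{\max}$ can sum to a root). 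Hence $J=I(\gamma_1)_{\max}\cap I(\gamma_2)_{\max}$ is a nonzero abelian ideal (it contains $\gamma$), so $J\in\Abo$ and in particular $\gamma\in\Delta^+_{\sf com}$.

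Since $J\in\Abo$, its rootlet $\mu':=\tau(J)\in\Delta^+_l$ is defined, and $J\subseteq I(\mu')_{\max}\subseteq I\langle{\curge}\mu'\rangle$. The heart of the argument is to identify $\mu'=\gamma$. One inequality is immediate: $\gamma\in J\subseteq I\langle{\curge}\mu'\rangle$ gives $\gamma\curge\mu'$. For the reverse I would establish $\mu'\curge\gamma_i$ for $i=1,2$ and then apply the join property once more to obtain $\mu'\curge\gamma_1\vee\gamma_2=\gamma$; by antisymmetry of $\curge$ this yields $\mu'=\gamma$. As a by-product $\gamma=\mu'$ is long, so the assertion $\gamma\in\gG$ is meaningful, and the sandwich above then closes the proof.

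The main obstacle is exactly the inequality $\mu'\curge\gamma_i$, i.e. controlling the rootlet map under inclusion of ideals. From $J\subseteq I(\gamma_i)_{\max}$ together with~\cite[Prop.\,3.2]{jems} and the fact $I(\mu)_{\min}\subseteq\gH$ one reduces it to the minimal ideals: intersecting with $\gH$ gives $I(\mu')_{\min}=J\cap\gH\subseteq I(\gamma_i)_{\max}\cap\gH=I(\gamma_i)_{\min}$, so everything comes down to the antitonicity statement $I(\mu')_{\min}\subseteq I(\mu)_{\min}\Rightarrow\mu'\curge\mu$ for long $\mu,\mu'$. I expect this monotonicity of $\tau$ with respect to inclusion to be the delicate point, since the rootlet need not be the minimal element of its ideal (e.g. for $\GR{B}{2}$ one has $\rt\big(\{\alpha_1+\alpha_2,\theta\}\big)=\alpha_1\notin\{\alpha_1+\alpha_2,\theta\}$); it should nonetheless follow from the description of $\tau$ via minuscule elements in~\cite[Sect.\,2]{imrn}, matching the order on $\Abo$ with that on rootlets, and once it is available the lemma follows as above.
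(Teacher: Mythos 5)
Your overall plan --- show that $J=I\langle{\curge}\gamma\rangle$, $\gamma=\gamma_1\vee\gamma_2$, is an abelian ideal with $\rt(J)=\gamma$, then sandwich $I(\gamma)_{\max}$ between $J$ and itself --- is coherent, and your first steps are correct: the join identity $J=I\langle{\curge}\gamma_1\rangle\cap I\langle{\curge}\gamma_2\rangle$ (which is also the first line of the paper's proof) and the abelianness of $J$ as an intersection of the abelian ideals $I(\gamma_i)_{\max}$. But the proof is not complete. The step you yourself flag as delicate, namely the antitonicity statement $I(\mu')_{\min}\subseteq I(\mu)_{\min}\Rightarrow \mu'\curge\mu$, is exactly where all the content of the lemma sits, and ``it should follow from the description of $\tau$ via minuscule elements'' is not an argument. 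Nothing about the definition of $\tau$ in \cite{imrn} makes this monotonicity formal or obvious; as your own $\GR{B}{2}$ example shows, the rootlet need not even belong to its minimal ideal, so there is no cheap way to extract $\mu'\curge\mu$ from the inclusion. As written, the proposal reduces the lemma to an unproved claim of essentially the same depth as the lemma itself: that is a genuine gap.

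The claim you need is true, and the honest way to get it is from the very theorem the paper's proof cites: by \cite[Theorem\,2.5]{jems}, $I(\mu_1)_{\max}\cap I(\mu_2)_{\max}=I(\mu_1\vee\mu_2)_{\max}$; intersecting with $\gH$ and using \cite[Prop.\,3.2]{jems} (together with $I(\mu)_{\max}\cap\gH=I(\mu)_{\min}$) yields $I(\mu_1)_{\min}\cap I(\mu_2)_{\min}=I(\mu_1\vee\mu_2)_{\min}$, whence your antitonicity follows from the injectivity of $\mu\mapsto I(\mu)_{\min}$. However, once \cite[Theorem\,2.5]{jems} is invoked, your entire detour through the rootlet of $J$ collapses: the paper simply intersects the hypotheses $I\langle{\curge}\gamma_i\rangle=I(\gamma_i)_{\max}$ and identifies both sides at once,
\[
I\langle{\curge}(\gamma_1\vee\gamma_2)\rangle
=I\langle{\curge}\gamma_1\rangle\cap I\langle{\curge}\gamma_2\rangle
=I(\gamma_1)_{\max}\cap I(\gamma_2)_{\max}
=I(\gamma_1\vee\gamma_2)_{\max}.
\]
So either you prove the monotonicity of $\tau$ under inclusion from scratch (a nontrivial task, and not sketched in your proposal), or you cite the intersection theorem --- in which case you should discard the rootlet computation and give the two-line proof above.
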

\begin{proof}  In view of the definition and existence of `$\vee$', for all $\gamma_1,\gamma_2\in\Delta^+_l$, 
we have \\  \centerline{
$I\langle{\curge}\gamma_1\rangle\cap I\langle{\curge}\gamma_2\rangle=
I\langle{\curge}(\gamma_1\vee \gamma_2)\rangle$.}

On the other hand,  
$I(\gamma_1)_{\max}\cap I(\gamma_2)_{\max}=I(\gamma_1\vee \gamma_2)_{\max}$, 
see~\cite[Theorem\,2.5]{jems}.
\end{proof}

This provides some ground for the following 
\begin{df}  \label{def:primit}
An amazing root $\gamma$ is said to be {\it primitive}, if it cannot be written as $\gamma_1\vee\gamma_2$, where $\gamma_1$ and $\gamma_2$ are incomparable amazing roots. 
\end{df}

Let us say that $\gamma\in \Delta^+$ is {\it inconvenient}, if there is a unique $\ap\in\Pi$ such that
$\gamma-\ap\in \Delta^+\cup\{0\}$. Equivalently, $\gamma$ cannot be written as $\gamma_1\vee\gamma_2$, where $\gamma_1$ and $\gamma_2$ are incomparable positive roots. Clearly, the simple roots are inconvenient, but there are always non-simple inconvenient roots, if $\Delta$ is not of type $\GR{A}{n}$.
%\end{df}

\noindent  It follows from these definitions that \\
\centerline{ (amazing) \& (inconvenient) $\Rightarrow$ (primitive) }

\noindent Moreover, our classification will show that if $\gamma$ is amazing, then ``primitive'' implies
``inconvenient''. That is, {\sl a posteriori}, the equivalence holds.

Consider the natural map $\varkappa: \Delta^+_{\sf com} \to \Delta^+_l$, $\gamma\mapsto 
\rt(I\langle{\curge}\gamma\rangle)$. Recall that if $I\in\Abo$ and $\mu=\rt(I)$, then  $I\subset 
I\langle{\curge}\mu\rangle$. Hence $\varkappa(\gamma)\curle\gamma$, and  $\gamma$ is amazing if and only if $\varkappa(\gamma)=\gamma$. Since $\rt(I)=\rt(I\cap\gH)$ for any $I\in\Abo$~\cite[Prop.\,3.2]{jems}, it suffices to know the rootlet for all abelian ideals of the form $I\langle{\curge}\mu\rangle\cap\gH$ with $\mu\in\Delta^+_{\sf com}$.

\begin{lm}    \label{lm:simple-glor}
If $\ap\in\Pi\cap \Delta^+_{\sf com}$, then $\ap\in \gG$. 
\end{lm}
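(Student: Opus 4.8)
The plan is to avoid computing $I(\ap)_{\max}$ directly and instead invoke the reformulation of ``amazing'' established just above the statement: for a commutative root $\gamma$, one has $\varkappa(\gamma)\curle\gamma$, and $\gamma\in\gG$ \emph{if and only if} $\varkappa(\gamma)=\gamma$, where $\varkappa(\gamma)=\rt(I\langle{\curge}\gamma\rangle)$. Since $\ap\in\Delta^+_{\sf com}$ by hypothesis, $\ap$ lies in the domain of $\varkappa$, and the ideal $I\langle{\curge}\ap\rangle$ contains $\ap$ (so it is nonzero and its rootlet is defined). Hence the whole task reduces to proving the single equality $\varkappa(\ap)=\ap$.

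First I would record the elementary fact that a simple root is minimal in $(\Delta^+,\curle)$: if $\nu\in\Delta^+$ satisfies $\nu\curle\ap$, then $\nu=\ap$. Indeed $\ap-\nu$ is a non-negative integral combination of simple roots, and comparing coefficients (writing $\ap=\ap_j$ and $\nu=\sum_i d_i\ap_i$ with $d_i\ge 0$) forces $d_i=0$ for $i\ne j$ and $d_j=1$, i.e. $\nu=\ap$. Then the argument closes immediately: $\varkappa(\ap)\in\Delta^+$ and $\varkappa(\ap)\curle\ap$, so minimality gives $\varkappa(\ap)=\ap$, whence $\ap\in\gG$ by the criterion.

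I expect no genuine obstacle here; the proof is essentially two lines once the $\varkappa$-criterion is in hand, and the only points meriting a remark are structural rather than technical. One is simply confirming that the hypothesis $\ap\in\Delta^+_{\sf com}$ is exactly what licenses the use of $\varkappa$. The other is the automatic by-product that, because $\varkappa$ takes values in $\Delta^+_l$, the equality $\varkappa(\ap)=\ap$ forces $\ap$ to be long; in particular this shows that a short simple root can never be commutative, so the hypothesis $\ap\in\Pi\cap\Delta^+_{\sf com}$ silently already places $\ap$ in $\Delta^+_l$, consistent with the definition of $\gG$.
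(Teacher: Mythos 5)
Your proof is correct and follows essentially the same route as the paper: both apply the criterion $\varkappa(\ap)\curle\ap$ together with the minimality of a simple root in $(\Delta^+,\curle)$ to force $\varkappa(\ap)=\ap$. The only cosmetic difference is that the paper asserts directly (via $[\theta:\ap]=1$) that a commutative simple root is long, whereas you recover this as a by-product of $\varkappa$ taking values in $\Delta^+_l$; both are valid.
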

\begin{proof}
For $\ap\in\Pi$, one has $\ap\in\Delta^+_{\sf com}$ if and only if $[\theta:\ap]=1$, and such an $\ap$ is 
necessarily long. As noted above, $\varkappa(\ap)\curle \ap$. Hence $\varkappa(\ap)=\ap$ and $\ap$
is amazing.
\end{proof}

In order to characterise the amazing roots in $\gH$ we need more notation and facts.
\\ \indent
\textbullet\quad If $\mu\in\Delta^+$, then a {\it path from $\mu$ to} $\theta$ is a sequence of roots
$(\mu_0=\mu, \mu_1,\dots, \mu_m=\theta)$ such that $\mu_{i+1}-\mu_i\in\Pi$. The length of the path is 
$m=\hot(\theta)-\hot(\mu)$. Clearly, 
$I\langle{\curge}\mu\rangle$ is the union of the roots occurring in all paths from $\mu$ to $\theta$. (Note that there is no restrictions on the length of $\mu$.) 
\\ \indent
\textbullet\quad For $\gamma\in\Delta^+_l$, there is a unique element of minimal length in $W$ that
takes $\theta$ to $\gamma$, denoted $w_\gamma$. Then 
$\ell(w_\gamma)=\hot(\theta^\vee)-\hot(\gamma^\vee)$, where $\theta^\vee$ and $\gamma^\vee$ 
are regarded as elements of the dual root system $\Delta^\vee$~\cite[Theorem\,4.1]{imrn}.
\\ \indent
\textbullet\quad 
For any $\gamma\in\Delta^+_l$, a {\it jump-path from $\gamma$ to} $\theta$ is a sequence of roots
$(\gamma_0=\gamma, \gamma_1,\dots, \gamma_t=\theta)$ such that 
%$\gamma_{i+1}\curge \gamma_i$ and 
$\gamma_{i+1}=s_{j_i}(\gamma_{i})$ for some
$\ap_{j_i}\in\Pi$ and $(\ap_{j_i}, \gamma_i)<0$. In this case $s_{j_1}\cdots s_{j_t}(\theta)=\gamma$ and 
$w_\gamma=s_{j_1}\cdots s_{j_t}$ is a reduced decomposition~\cite[Section\,4]{imrn}. 
The length of the jump-path, $t$, is equal to $\ell(w_\gamma)$.
Note that if $\ap_{j_i}$ is short, then $\gamma_{i+1}- \gamma_i$ is a non-trivial multiple of $\ap_{j_i}$, 
i.e., there are also some short roots between $\gamma_i$ and $\gamma_{i+1}$. Therefore, a jump-path 
is a path if and only if all the roots $\ap_{j_1}, \dots,\ap_{j_t}$ are long.

\begin{prop}   \label{prop:gamma-in-H}
For $\gamma\in\gH\cap \Delta^+_l$, the following conditions are equivalent:
\begin{itemize}
\item[\sf (i)] \ $\gamma$ is amazing;
\item[\sf (ii)] \ there is a unique path in $\Delta^+$ from $\gamma$ to $\theta$, and all roots in this path are long;
\item[\sf (iii)] \  $w_\gamma$ has a unique reduced decomposition that includes only long simple reflections.
\end{itemize}
\end{prop}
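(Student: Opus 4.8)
The plan is to work throughout with a fixed $\gamma\in\gH\cap\Delta^+_l$ and to reduce {\sf (i)} to an equality of ideals adapted to $\gH$. First I would record the elementary fact that $I\langle{\curge}\gamma\rangle\subseteq\gH$: if $\nu\curge\gamma$ then $\nu=\gamma+\sum c_i\ap_i$ with $c_i\ge0$, so $(\nu,\theta)=(\gamma,\theta)+\sum c_i(\ap_i,\theta)\ge(\gamma,\theta)>0$, since $(\ap_i,\theta)\ge0$ for all $i$ ($\theta$ being the highest root). As $\gamma\in\gH$, Theorem\,5.1 of~\cite{imrn} gives $I(\gamma)_{\min}=I(\gamma)_{\max}$, and together with the always-valid inclusion $I(\gamma)_{\max}\subseteq I\langle{\curge}\gamma\rangle$ this shows that {\sf (i)} is equivalent to the single equality $I\langle{\curge}\gamma\rangle=I(\gamma)_{\min}$. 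I would also note two computations in the root lattice: (a)~if $\nu$ is long and $\ap\in\Pi$ is long with $(\ap,\nu)<0$, then $\langle\nu,\ap^\vee\rangle=-1$, whence $s_\ap(\nu)=\nu+\ap$ is a root that is again long; and (b)~if $\nu$ and $\nu+\ap$ are both long with $\ap\in\Pi$, then $\ap$ is long and $\langle\nu,\ap^\vee\rangle=-1$. Thus a step of a path staying among long roots is exactly a jump-step by a long simple reflection, and conversely.

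For {\sf (ii)}$\Leftrightarrow${\sf (iii)} I would use the dictionary between jump-paths from $\gamma$ to $\theta$ and reduced decompositions of $w_\gamma$ recalled before the statement, together with (a)--(b). By the quoted criterion a jump-path is a path exactly when all its simple reflections are long; by (a) such a path then consists of long roots, and by (b) any path of long roots is an all-long jump-path. Hence all-long paths from $\gamma$ to $\theta$ correspond bijectively to reduced decompositions of $w_\gamma$ using only long reflections. It is convenient to reformulate the bookkeeping: every path has length $\hot(\theta)-\hot(\gamma)$, every jump-path has length $\ell(w_\gamma)=\hot(\theta^\vee)-\hot(\gamma^\vee)$, and refining a jump-path to a path inserts, for each short reflection, the missing roots of the corresponding $\ap$-string; therefore $\ell(w_\gamma)\le\hot(\theta)-\hot(\gamma)$, with equality iff every reduced decomposition uses only long reflections. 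Finally, the uniqueness clause of {\sf (ii)} says that the interval $[\gamma,\theta]$ is a chain, and the ``all long'' clause that this chain consists of long roots; so {\sf (ii)} is precisely the statement that $[\gamma,\theta]$ is a chain of long roots. Matching this against uniqueness of the reduced word and the long-reflection condition yields the equivalence, the one genuinely nontrivial point being the exclusion of a path through short roots when the unique all-long word already exists, which I treat next.

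The heart of the argument, and the step I expect to be the main obstacle, is {\sf (i)}$\Leftrightarrow${\sf (ii)}, where the affine definition of the rootlet must be converted into the chain-of-long-roots picture. Here I would invoke the description of $I(\gamma)_{\min}$ furnished by~\cite{imrn} through the minuscule element attached to $w_\gamma$, in order to prove the key assertion: for the up-set $I\langle{\curge}\gamma\rangle\subseteq\gH$ with minimal element $\gamma$, one has $\rt(I\langle{\curge}\gamma\rangle)=\gamma$ if and only if $I\langle{\curge}\gamma\rangle$ is a chain of long roots. Granting this, {\sf (i)}$\Leftrightarrow${\sf (ii)} is immediate, since $\gamma$ is amazing iff $\varkappa(\gamma)=\rt(I\langle{\curge}\gamma\rangle)=\gamma$. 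For the contrapositive I would isolate the two ways the chain condition can fail inside $[\gamma,\theta]$: a \emph{branch} (two distinct roots covering a common one), which yields a second maximal chain and hence a second path; and a \emph{short root} lying above $\gamma$. In either case an extra root enters $I\langle{\curge}\gamma\rangle$ beyond $I(\gamma)_{\min}$ and forces the rootlet strictly below $\gamma$. This is exactly what happens for $\gamma=e_1-e_3$ in types $\GR{B}{3}$ (a short root $e_1$ sits on the unique path to $\theta$) and $\GR{D}{4}$ (the ascent branches), where indeed $\varkappa(\gamma)\ne\gamma$.

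The delicate points on which I would spend the most care are therefore: (1)~reading off from the minuscule-element construction that inserting a short root, or a branch, into $[\gamma,\theta]$ strictly enlarges $I\langle{\curge}\gamma\rangle$ past $I(\gamma)_{\min}$ and hence lowers the rootlet below the minimal root $\gamma$; and (2)~the matching of the two uniqueness statements, i.e.\ showing that once $\ell(w_\gamma)=\hot(\theta)-\hot(\gamma)$ and the reduced word is unique, no stray short root can occur in $[\gamma,\theta]$. Both are governed by the single mechanism that a short root or a branch in $[\gamma,\theta]$ creates a jump-step which is not a path-step, thereby separating $I(\gamma)_{\min}$ from $I\langle{\curge}\gamma\rangle$; turning this mechanism into a proof from the rootlet machinery of~\cite{imrn} is the crux of the whole statement.
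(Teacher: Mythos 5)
Your preliminary reductions are sound and essentially the paper's: using \cite[Theorem\,5.1]{imrn} to rewrite {\sf (i)} as $I\langle{\curge}\gamma\rangle=I(\gamma)_{\min}$, and your facts (a)--(b) identifying all-long path steps with all-long jump steps are exactly the dictionary the paper uses for {\sf (ii)}$\Leftrightarrow${\sf (iii)}. But the proposal has a genuine hole at precisely the place you yourself flag as ``the crux'': the key assertion --- $\rt(I\langle{\curge}\gamma\rangle)=\gamma$ if and only if $I\langle{\curge}\gamma\rangle$ is a chain of long roots --- is never proved. You describe a mechanism (a branch or a short root ``forces the rootlet strictly below $\gamma$'') and then state that turning this mechanism into a proof from the machinery of \cite{imrn} is the main obstacle; since both the hard direction of {\sf (ii)}$\Leftrightarrow${\sf (iii)} and all of {\sf (i)}$\Leftrightarrow${\sf (ii)} are deferred to this assertion, nothing is actually established. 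What closes the gap in the paper is a counting fact you never invoke: $\#I(\gamma)_{\min}=\hot(\theta^\vee)-\hot(\gamma^\vee)+1$ \cite[Theorem\,4.2]{imrn}, which is exactly the number of (long) roots on \emph{any} jump-path from $\gamma$ to $\theta$. Since the roots of every jump-path, together with any short roots jumped over, all lie in $I\langle{\curge}\gamma\rangle$, the equality $I(\gamma)_{\min}=I\langle{\curge}\gamma\rangle$ can hold only if there is a unique jump-path and it is a path, and conversely; no rootlet computation or minuscule-element analysis is needed. You do quote $\ell(w_\gamma)=\hot(\theta^\vee)-\hot(\gamma^\vee)$, but only to compare lengths of paths with lengths of jump-paths; the decisive step is comparing the \emph{cardinality} of $I(\gamma)_{\min}$ with that of $I\langle{\curge}\gamma\rangle$, and without it your argument does not terminate.

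A second, related defect: your key assertion is phrased via $\rt(I\langle{\curge}\gamma\rangle)$ and via $\varkappa$, which are defined only when $I\langle{\curge}\gamma\rangle$ is abelian, i.e.\ for $\gamma\in\Delta^+_{\sf com}$. The proposition concerns \emph{all} $\gamma\in\gH\cap\Delta^+_l$, and this set contains non-commutative roots: for $\gamma=\esi_2-\esi_3$ in $\GR{B}{n}$ ($n\ge 3$) one has $\gamma\in\gH\cap\Delta^+_l$, while $\esi_1,\esi_2\in I\langle{\curge}\gamma\rangle$ and $\esi_1+\esi_2=\theta$, so $I\langle{\curge}\gamma\rangle$ is not abelian and has no rootlet. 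For such $\gamma$ condition {\sf (i)} fails trivially, but your framework gives no way to deduce that {\sf (ii)} and {\sf (iii)} fail as well --- that still requires an argument. The paper's counting argument covers these roots uniformly, whereas your key assertion cannot even be stated for them.
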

\begin{proof}
Since $\gamma\in \gH$, one has $I(\gamma)_{\min}=I(\gamma)_{\max}$~\cite[Theorem\,5.1]{imrn}. 

{\sf (i)$\Rightarrow$(ii)} \ It is known that $\# I(\gamma)_{\min}=\hot(\theta^\vee)-\hot(\gamma^\vee)+1$~\cite[Theorem\,4.2]{imrn} and any jump-path from $\gamma$ to $\theta$ contains this number of long roots.
All these roots (and all short roots between them, if any) belong to $I\langle{\curge}\gamma\rangle$.
Therefore, the condition $I(\gamma)_{\min}=I\langle{\curge}\gamma\rangle$ implies
that this jump-path is a path and it is unique.

{\sf (ii)$\Leftrightarrow$(iii)} \ This readily follows from the fact that any reduced decomposition of 
$w_\gamma$ yields a jump-path from $\gamma$ to $\theta$, and conversely, any jump-path determines
a reduced decomposition of $w_\gamma$~\cite[Section\,4]{imrn}.

{\sf (ii)$\Rightarrow$(i)} \ If $\gamma$ is not amazing, then $\#I\langle{\curge}\gamma\rangle>
\#I(\gamma)_{\min}$. This can only happen if there are several jump-paths from $\gamma$ to $\theta$, 
or the unique jump-path includes a ``jump" over short roots; that is, {\sf (ii)} does not hold.
\end{proof}

\begin{lm}   \label{lm:elem-extens}
Suppose that $\gamma\in \gG$ and  $J=I\langle{\curge}\gamma\rangle\cup\{\nu\}$ is an abelian ideal for
some $\nu\in\Delta^+$. Then $\nu\in \gH$.
\end{lm}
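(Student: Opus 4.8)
The plan is to argue by contradiction, exploiting that (up to its rootlet) an abelian ideal is determined by its intersection with the Heisenberg set $\gH$. First I would record the set-up. Since $\gamma$ is amazing, $I\langle{\curge}\gamma\rangle=I(\gamma)_{\max}$; and because $J$ is an \emph{elementary} extension we may assume $\nu\notin I\langle{\curge}\gamma\rangle$ (otherwise $J=I\langle{\curge}\gamma\rangle$ and there is nothing to prove). Hence $J\supsetneq I(\gamma)_{\max}$. As $J$ is abelian and contains $\gamma$, it is a nonzero abelian ideal, $J\in\Abo$, and so it lies in exactly one block $\Ab_\mu$ of the partition of $\Abo$.

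Now suppose, for contradiction, that $\nu\notin\gH$. The point is that adjoining such a $\nu$ leaves the trace on $\gH$ unchanged:
\[
J\cap\gH=\bigl(I\langle{\curge}\gamma\rangle\cup\{\nu\}\bigr)\cap\gH=I\langle{\curge}\gamma\rangle\cap\gH=I(\gamma)_{\max}\cap\gH,
\]
where the middle equality uses $\nu\notin\gH$ and the last uses that $\gamma$ is amazing. Invoking $\rt(I)=\rt(I\cap\gH)$ for $I\in\Abo$~\cite[Prop.\,3.2]{jems}, I would then compute
\[
\rt(J)=\rt(J\cap\gH)=\rt\bigl(I(\gamma)_{\max}\cap\gH\bigr)=\rt\bigl(I(\gamma)_{\max}\bigr)=\gamma,
\]
so that $J\in\Ab_\gamma$. (Equivalently, one may use the criterion $I\in\Ab_\mu\Leftrightarrow I\cap\gH=I(\mu)_{\min}\cap\gH$ together with $I(\gamma)_{\max}\cap\gH=I(\gamma)_{\min}\cap\gH$, the latter because $I(\gamma)_{\max}\in\Ab_\gamma$.)

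But $I(\gamma)_{\max}$ is the \emph{unique maximal} element of $\Ab_\gamma$, while $J\supsetneq I(\gamma)_{\max}$ would also belong to $\Ab_\gamma$ --- a contradiction. Therefore $\nu\in\gH$, as claimed.

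The argument is short because the real content is carried by two facts already available: that the rootlet depends only on the trace on $\gH$, and that each $\Ab_\mu$ has a unique maximal element. I expect no genuine obstacle; the only points needing care are the reduction step itself (observing that a root outside $\gH$ cannot alter $J\cap\gH$) and the insistence that $J$ be an \emph{honest} extension, $\nu\notin I\langle{\curge}\gamma\rangle$, so that the clash with maximality of $I(\gamma)_{\max}$ really bites.
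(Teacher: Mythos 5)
Your proof is correct and is essentially the paper's own argument run in contrapositive: both hinge on the two facts that the rootlet of an abelian ideal is determined by its trace on $\gH$ (\cite[Prop.\,3.2]{jems}) and that $I(\gamma)_{\max}$ is the unique maximal element of $\Ab_\gamma$, so a proper abelian extension of it cannot have rootlet $\gamma$. The paper deduces $\rt(J)\ne\gamma$ directly and concludes $J\cap\gH\ne I\langle{\curge}\gamma\rangle\cap\gH$, hence $\nu\in\gH$; you assume $\nu\notin\gH$ and derive the contradiction with maximality --- the same reasoning in the opposite direction.
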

\begin{proof}
Since $I\langle{\curge}\gamma\rangle=I(\gamma)_{\max}$, one has $\rt(J)\ne \rt(I\langle{\curge}\gamma\rangle)$. (This actually means that $\rt(J)\prec \rt(I\langle{\curge}\gamma\rangle)$. 
Hence $J\cap\gH\ne I\langle{\curge}\gamma\rangle\cap\gH$, see \cite[Prop.\,3.2]{jems}, i.e., 
$\nu\in \gH$.
\end{proof}

\begin{prop}    \label{prop:polnaya-xar}
For any $\gamma\in\Delta^+_{\sf com}$, we have 
\beq   \label{eq:polnaya-xar}
  \hot(\theta^\vee)-\hot(\gamma^\vee)+1  \le \# (I\langle{\curge}\gamma\rangle\cap\gH) .
\eeq
Furthermore, $\gamma$ is amazing if and only if the equality holds in Eq.~\eqref{eq:polnaya-xar}.
\end{prop}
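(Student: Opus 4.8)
The plan is to first rewrite the right-hand side of~\eqref{eq:polnaya-xar} as a dual height and thereby reduce the whole statement to a single height comparison. Since $\gamma\in\Delta^+_{\sf com}$, the set $I:=I\langle{\curge}\gamma\rangle$ is an abelian ideal; put $\mu:=\rt(I)=\varkappa(\gamma)$, a long root with $\mu\curle\gamma$. By \cite[Prop.\,3.2]{jems} one has $I\cap\gH=I(\mu)_{\min}\cap\gH$, and since $I(\mu)_{\min}\subset\gH$ by \cite[Theorem\,4.3]{imrn}, this simplifies to $I\cap\gH=I(\mu)_{\min}$. Combined with the cardinality formula $\#I(\mu)_{\min}=\hot(\theta^\vee)-\hot(\mu^\vee)+1$ of \cite[Theorem\,4.2]{imrn}, the assertion~\eqref{eq:polnaya-xar} becomes \emph{equivalent} to $\hot(\mu^\vee)\le\hot(\gamma^\vee)$, and, recalling that $\gamma$ is amazing iff $\varkappa(\gamma)=\gamma$, the equality clause becomes ``$\hot(\mu^\vee)=\hot(\gamma^\vee)$ iff $\mu=\gamma$''. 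So it remains to prove $\hot(\varkappa(\gamma)^\vee)\le\hot(\gamma^\vee)$, with equality exactly when $\varkappa(\gamma)=\gamma$.

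For $\gamma\in\Delta^+_l$ this is immediate from a containment of ideals. Here $I(\gamma)_{\min}$ is defined, $I(\gamma)_{\min}\subset I(\gamma)_{\max}\subset I$ by \cite[Theorem\,3.5]{jems}, and $I(\gamma)_{\min}\subset\gH$ by \cite[Theorem\,4.3]{imrn}; hence $I(\gamma)_{\min}\subset I\cap\gH=I(\mu)_{\min}$. Passing to cardinalities and applying \cite[Theorem\,4.2]{imrn} to each side gives $\hot(\theta^\vee)-\hot(\gamma^\vee)+1\le\hot(\theta^\vee)-\hot(\mu^\vee)+1$, which is exactly~\eqref{eq:polnaya-xar}. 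Equality of cardinalities forces $I(\gamma)_{\min}=I\cap\gH$, and by \cite[Prop.\,3.2]{jems} this last equality says precisely that $I\in\Ab_\gamma$, i.e. $\rt(I)=\gamma$, i.e. $\gamma$ is amazing. This disposes of the long roots, inequality and equality clause together.

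The genuine difficulty, and what I expect to be the main obstacle, is the case of a \emph{short} $\gamma$: now $I(\gamma)_{\min}$ is unavailable, and, since amazing roots are long by definition, I must prove the \emph{strict} inequality. My plan is to bound the left side of~\eqref{eq:polnaya-xar} crudely from above: as $\theta\in I\cap\gH$ one always has $\#(I\cap\gH)\ge1$, so it suffices to show $\hot(\gamma^\vee)>\hot(\theta^\vee)$, which makes the left side $\le0$ and hence strictly below the right side. I would derive this from the stronger statement that $\gamma^\vee$ dominates $\theta^\vee$ in the dual system $\Delta^\vee$, i.e. that $\gamma^\vee-\theta^\vee$ is a nonnegative integral combination of simple coroots; since $\gamma$ is short, $\gamma^\vee$ is long in $\Delta^\vee$ while $\theta^\vee$ is short, so $\gamma^\vee\ne\theta^\vee$, and then $\hot(\gamma^\vee)-\hot(\theta^\vee)\ge1$ because this difference is the total (positive) multiplicity of simple coroots in $\gamma^\vee-\theta^\vee$.

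To discharge that dual-domination claim---morally the statement that commutativity forces a short root to lie high in $\Delta^+$---I would invoke the explicit list of commutative roots from \cite[Theorem\,4.4]{jac06} and verify $\gamma^\vee\curge\theta^\vee$ in $\Delta^\vee$ case by case for the non-simply-laced types $\GR{B}{n},\GR{C}{n},\GR{F}{4},\GR{G}{2}$; the rank-two checks ($\GR{B}{2}$, $\GR{G}{2}$), together with $\GR{C}{n}$ and $\GR{B}{n}$, already exhibit the pattern (e.g. $e_1$ in $\GR{B}{n}$ gives $\gamma^\vee-\theta^\vee=\alpha_1^\vee$). Everything outside this short-root inequality is bookkeeping with the cited fibre/rootlet results, so once it is established the proposition follows.
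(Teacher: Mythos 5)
Your reduction and your treatment of long roots are correct and essentially reproduce the paper's own argument: the paper deduces \eqref{eq:polnaya-xar} from the containment $I(\gamma)_{\min}=I(\gamma)_{\max}\cap\gH\subset I\langle{\curge}\gamma\rangle\cap\gH$ together with $\#I(\gamma)_{\min}=\hot(\theta^\vee)-\hot(\gamma^\vee)+1$, and settles the equality clause exactly as you do, via \cite[Prop.\,3.2]{jems}. In fact the paper's proof is written \emph{only} in these terms, i.e.\ only for $\gamma\in\Delta^+_l$, so you are right that short commutative roots need a separate argument; the problem is that the argument you propose for them is wrong.

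The dual-domination claim fails, and it fails precisely in type $\GR{C}{n}$, which you cite as ``exhibiting the pattern''. Take $\gamma=\esi_{n-1}+\esi_n$ in $\GR{C}{n}$, $n\ge 3$ (more generally $\esi_i+\esi_j$ with $i\ge 2$). This is a short commutative root: the roots above it are the $\esi_k+\esi_l$ ($k\le n-1$, $l\le n$, $k<l$) and the $2\esi_k$ ($k\le n-1$), and no two of these sum to a root. Its coroot $\gamma^\vee=\esi_{n-1}+\esi_n=\ap_{n-1}^\vee+2\ap_n^\vee$ has height $3$ in $\Delta^\vee$ (type $\GR{B}{n}$), while $\theta^\vee=\esi_1=\ap_1^\vee+\cdots+\ap_n^\vee$ has height $n$. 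So $\gamma^\vee\curge\theta^\vee$ is false (the coefficient of $\ap_1^\vee$ in $\gamma^\vee-\theta^\vee$ is $-1$), and for $n\ge 4$ even the weaker inequality $\hot(\gamma^\vee)>\hot(\theta^\vee)$ is false: the left-hand side of \eqref{eq:polnaya-xar} equals $n-2>0$, so your crude bound ``LHS $\le 0$'' is unavailable. (The proposition itself survives, since here $\#\bigl(I\langle{\curge}\gamma\rangle\cap\gH\bigr)=n$.) The rank-two check is misleading: $\esi_1+\esi_2$ in $\GR{C}{2}$ does satisfy $\gamma^\vee-\theta^\vee=\ap_2^\vee$, but already $\esi_2+\esi_3$ in $\GR{C}{3}$ gives $\hot(\gamma^\vee)=\hot(\theta^\vee)=3$, and from $\GR{C}{4}$ on the inequality reverses. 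To close the short-root case you must prove the strict inequality $\hot(\varkappa(\gamma)^\vee)<\hot(\gamma^\vee)$ (equivalently, compare $\#\bigl(I\langle{\curge}\gamma\rangle\cap\gH\bigr)$ with $\hot(\theta^\vee)-\hot(\gamma^\vee)+1$ directly) by inspecting the short commutative roots of $\GR{B}{n}$, $\GR{C}{n}$, $\GR{F}{4}$, $\GR{G}{2}$; the height-domination shortcut cannot do it.
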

\begin{proof}
Since $I(\gamma)_{\max}\subset I\langle{\curge}\gamma\rangle$, we have
\[
  I(\gamma)_{\min}=I(\gamma)_{\max}\cap\gH\subset I\langle{\curge}\gamma\rangle\cap \gH .
\]
Because $\#I(\gamma)_{\min}=\hot(\theta^\vee)-\hot(\gamma^\vee)+1$,  we see that 
Eq.~\eqref{eq:polnaya-xar} holds.
Furthermore, $I(\gamma)_{\max}\cap\gH=I\langle{\curge}\gamma\rangle\cap \gH$ if and only if 
$I(\gamma)_{\max}=I\langle{\curge}\gamma\rangle$, i.e., $\gamma$ is amazing.
\end{proof}

\begin{ex}   \label{ex:non-glor-Dn} 
As usual, the simple roots of $\Delta(\GR{D}{n})$, $n\ge 4$, are $\ap_i=\esi_i-\esi_{i+1}$ ($i=1,\dots,n-1$)
and $\ap_n=\esi_{n-1}+\esi_n$. Then $\theta=\esi_1+\esi_2$.
\\ \indent
{\sf (1)} \ For $\gamma=\esi_2+\esi_j$ ($j\ge 3$), we have 
$\gamma\in \Delta^+_{\sf com}\cap\gH$ and $I\langle{\curge}\gamma\rangle=\{
\esi_2+\esi_k\ (3\le k\le j), \ \esi_1+\esi_l\ (2\le l\le j)\}$. However, if $j\ge 4$, then
$I\langle{\curge}\gamma\rangle\cup \{\esi_3+\esi_4\}$ is an abelian ideal and 
$\esi_3+\esi_4\not\in\gH$. Hence $\esi_2+\esi_j$ is not amazing for $j\ge 4$.
\\ \indent
{\sf (2)} \ If $\gamma=\esi_i+\esi_{i+1}$ ($i=1,\dots,n-2$), then $\hot(\gamma^\vee)=\hot(\gamma)=2n-2i-1$ and $I\langle{\curge}\gamma\rangle=\{\esi_k+\esi_l\mid k<l\le i+1\}$. Here
$I\langle{\curge}\gamma\rangle\cap\gH=\{\esi_k+\esi_l\mid k<l\le i+1\ \& \ k\le 2\}$. Hence
$
   \#\bigl(I\langle{\curge}\gamma\rangle\cap\gH\bigr)=2i-1=\hot(\theta^\vee)-\hot(\gamma^\vee)+1
$
and $\gamma\in\gG$.
\end{ex}

\begin{prop}    \label{prop:highest-in-dopoln}
Let $\tilde\theta$ be a highest root in the (possibly reducible) subsystem $\Delta^+\cap\theta^\perp=
\Delta^+\setminus \gH$. If $\tilde\theta$ is long, then it is amazing.
\end{prop}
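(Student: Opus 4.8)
The plan is to deduce amazingness from Proposition~\ref{prop:polnaya-xar} rather than from Proposition~\ref{prop:gamma-in-H}, since $\tilde\theta\notin\gH$. Thus I must (a) check that $\tilde\theta\in\Delta^+_{\sf com}$, so that \eqref{eq:polnaya-xar} applies, and (b) show that equality holds there, i.e. $\#(I\langle{\curge}\tilde\theta\rangle\cap\gH)=\hot(\theta^\vee)-\hot(\tilde\theta^\vee)+1$. Since $I(\tilde\theta)_{\min}\subseteq I\langle{\curge}\tilde\theta\rangle\cap\gH$ and $\#I(\tilde\theta)_{\min}=\hot(\theta^\vee)-\hot(\tilde\theta^\vee)+1$, goal (b) is equivalent to the set equality $I\langle{\curge}\tilde\theta\rangle\cap\gH=I(\tilde\theta)_{\min}$.

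First I would record the structure of $\theta^\perp$. Put $\Pi_0=\{\ap\in\Pi\mid(\ap,\theta)=0\}$. For $\beta\in\Delta^+$ one has $\langle\ap_i,\theta^\vee\rangle\ge 0$ for all $i$ (as $\theta$ is dominant), so $(\beta,\theta)=0$ forces $[\beta:\ap_i]=0$ whenever $\ap_i\notin\Pi_0$; hence $\Delta^+\cap\theta^\perp=\Delta^+_{\Pi_0}$, the positive system generated by $\Pi_0$. In particular $\gH=\Delta^+\setminus\Delta^+_{\Pi_0}$ is an upper ideal, while $\Delta^+_{\Pi_0}$ is a lower ideal. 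Since $\tilde\theta$ is a highest root of a component $\Delta_1$ of $\Delta_{\Pi_0}$, it is a maximal element of $\Delta^+_{\Pi_0}$; as the ordering on $\Delta_{\Pi_0}$ is inherited from $\Delta$, the only root in $\theta^\perp$ that is $\curge\tilde\theta$ is $\tilde\theta$ itself. Consequently every root $\succ\tilde\theta$ lies in $\gH$, and $I\langle{\curge}\tilde\theta\rangle=\{\tilde\theta\}\sqcup S$ with $S:=\{\nu\in\gH\mid\nu\curge\tilde\theta\}$.

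Next, commutativity. By the Heisenberg structure, two elements of $\gH\setminus\{\theta\}$ sum to a root only if that root is $\theta$, and $\theta+\nu$ is never a root; so the only ways $I\langle{\curge}\tilde\theta\rangle$ could fail to be abelian are (i) a pair $\nu,\theta-\nu\in S$, or (ii) $\tilde\theta+\nu\in\Delta^+$ for some $\nu\in S$. Both force $\theta\curge 2\tilde\theta$ (in case (i), $\theta=\nu+(\theta-\nu)\curge 2\tilde\theta$; in case (ii), $\theta\curge\tilde\theta+\nu\curge 2\tilde\theta$). So commutativity reduces to the single claim $\theta\not\curge 2\tilde\theta$. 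I would prove this by pairing $\eta:=\theta-2\tilde\theta$ with $\tilde\theta^\vee$: if $\eta\curge 0$, then writing $\eta=\sum c_i\ap_i$ with $c_i\ge 0$, the contributions to $\langle\eta,\tilde\theta^\vee\rangle$ from $\ap_i$ in the support $\Pi_1$ of $\Delta_1$ are $\ge 0$ (as $\tilde\theta$ is dominant in $\Delta_1$) and those from the other components of $\Pi_0$ vanish, so $\langle\eta,\tilde\theta^\vee\rangle\ge\sum_{\ap_i\notin\Pi_0}[\theta:\ap_i]\,\langle\ap_i,\tilde\theta^\vee\rangle$; but the left side equals $\langle\theta,\tilde\theta^\vee\rangle-2\langle\tilde\theta,\tilde\theta^\vee\rangle=-4$, while on the right only the few nodes of $\Pi\setminus\Pi_0$ adjacent to $\Delta_1$ contribute, and the products (mark)$\times|\langle\ap_i,\tilde\theta^\vee\rangle|$ are too small to reach $4$. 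This last estimate is a short check that uses essentially that $\tilde\theta$ is long (it fails for short $\tilde\theta$, e.g. in $\GR{G}{2}$, which is exactly the excluded case).

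The hard part is goal (b): the rootlet equality. Granting commutativity, $S=I\langle{\curge}\tilde\theta\rangle\cap\gH$ is an abelian ideal contained in $\gH$, hence $S=I(\mu)_{\min}$ for $\mu:=\rt(S)=\varkappa(\tilde\theta)$, and $\mu\curle\tilde\theta$; I must rule out $\mu\prec\tilde\theta$. If $\mu\in\gH$, then $\mu$ is the minimum of $S$, whence $\mu\curge\tilde\theta$, a contradiction; so $\mu\in\theta^\perp$, i.e. $\mu\in\Delta_1$ with $\mu\prec\tilde\theta$. The idea is then to travel up along a path from $\mu$ to $\theta$ and catch the first step $\sigma\to\sigma+\ap$ that leaves $\theta^\perp$ (so $\ap\in\Pi\setminus\Pi_0$ and $\sigma+\ap\in\gH$): since $\sigma\in\Delta_1$ with $\sigma\curle\tilde\theta$, one checks $(\sigma+\ap)-\tilde\theta\not\curge 0$, so $\sigma+\ap\in S$ while $\sigma+\ap\not\curge\tilde\theta$ --- contradicting $S=\{\nu\in\gH\mid\nu\curge\tilde\theta\}$, provided the crossing occurs at some $\sigma\ne\tilde\theta$. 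Securing that provision is the main obstacle: when $\mu\prec\tilde\theta$ one must guarantee a $\gH$-neighbour strictly below $\tilde\theta$, and here both hypotheses --- that $\tilde\theta$ be long and be the highest root of $\Delta_1$ (so of full support in $\Pi_1$) --- are genuinely used. I expect this step to require the explicit description of $I(\mu)_{\min}$ through the minuscule element attached to $\mu$ (from~\cite{imrn}) or, failing a uniform argument, a type-by-type verification keyed to the possible components $\Delta_1$ of $\theta^\perp$.
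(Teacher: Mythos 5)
Your proposal is incomplete at its decisive step, and the gap is genuine. The preliminary work is fine and coincides with the opening of the paper's own proof: $\Delta^+\cap\theta^\perp=\Delta^+_{\Pi_0}$, the root $\tilde\theta$ is maximal there, hence $I\langle{\curge}\tilde\theta\rangle\setminus\{\tilde\theta\}\subset\gH$; and your reduction of commutativity to $\theta\not\curge 2\tilde\theta$ can indeed be closed (if $\Delta$ is not of type $\GR{A}{n}$, then $\Pi\setminus\Pi_0=\{\tilde\ap\}$ with $[\theta:\tilde\ap]=2$, and because $\tilde\theta$ is \emph{long} one has $\langle\tilde\ap,\tilde\theta^\vee\rangle\ge-1$, so your pairing gives $-4\ge-2$, absurd; type $\GR{A}{n}$ is analogous). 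But goal (b), ruling out $\mu:=\varkappa(\tilde\theta)\prec\tilde\theta$, is never proved, as you yourself admit. Worse, the path argument you sketch for it contains a non sequitur: at the crossing step you deduce $\sigma+\ap\not\curge\tilde\theta$ and then write ``so $\sigma+\ap\in S$'', whereas $S=\{\nu\in\gH\mid\nu\curge\tilde\theta\}$ by definition, so this is exactly what $\sigma+\ap$ is \emph{not}. What your contradiction actually requires is $\sigma+\ap\in I(\mu)_{\min}$, and nothing supplies that: by Proposition~\ref{prop:polnaya-xar}, $I(\mu)_{\min}$ exhausts $\{\nu\in\gH\mid\nu\curge\mu\}$ precisely when $\mu$ is amazing, and your $\mu$ is an unknown root for which no such property is available --- so the needed membership is essentially the assertion being proved. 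Minimal ideals are rigid objects; they need not even contain their rootlet (in $\GR{C}{n}$ one has $I(2\esi_k)_{\min}=\{2\esi_1\}\cup\{\esi_1+\esi_j\mid 2\le j\le k\}\not\ni 2\esi_k$ for $k\ge 2$), so ``travelling up from $\mu$'' gives no control over $I(\mu)_{\min}$ without the minuscule-element machinery you defer to. In short, securing your ``provision'' is not a technicality; it is the whole problem.

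The paper closes exactly this step with one structural input that your proposal never invokes: the fibre of the rootlet map over $\tilde\theta$ itself. Since $\tilde\theta$ is long, it cannot be dominant (a dominant long root equals $\theta$, but $\tilde\theta\perp\theta$), and since $(\ap,\tilde\theta)\ge 0$ for all $\ap\in\Pi_0$, necessarily $(\tilde\ap,\tilde\theta)<0$ for the unique $\tilde\ap\in\Pi\cap\gH$ (for $\GR{A}{n}$, for both simple roots in $\gH$). By \cite[Prop.\,5.3]{imrn} this forces $\#\Ab_{\tilde\theta}=2$, i.e. $\Ab_{\tilde\theta}=\{I(\tilde\theta)_{\min},I(\tilde\theta)_{\max}\}$ with $I(\tilde\theta)_{\min}\neq I(\tilde\theta)_{\max}$. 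Since the abelian ideals contained in $\gH$ are exactly the minimal ones and rootlets are unique, $I(\tilde\theta)_{\max}\not\subset\gH$; but $I(\tilde\theta)_{\max}\subset I\langle{\curge}\tilde\theta\rangle$, whose only element outside $\gH$ is $\tilde\theta$. Hence $\tilde\theta\in I(\tilde\theta)_{\max}$, and, being an upper ideal, $I(\tilde\theta)_{\max}$ must then contain all of $I\langle{\curge}\tilde\theta\rangle$, so the two coincide. This bypasses both the commutativity check and any rootlet computation. If you wish to salvage your framework, the fibre statement over $\tilde\theta$ (rather than an analysis of $I(\mu)_{\min}$ over the unknown $\mu$) is the fact to import.
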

\begin{proof}
If $\ap\in \Pi\cap\theta^\perp$, then $(\ap,\tilde\theta)\ge 0$ and $\ap+\tilde\theta\not\in\Delta$. While if 
$\ap\in \Pi\cap\gH$ and $(\ap,\tilde\theta)<0$, then $\ap+\tilde\theta\in\gH$. This means that
$I\langle{\curge}\tilde\theta\rangle \setminus\{\tilde\theta\}\subset \gH$.
Assuming that $\tilde\theta\in\Delta^+_l$, consider the poset $\Ab_{\tilde\theta}$.
\\ \indent
\textbullet\quad If $\Pi\cap\gH=\{\tilde\ap\}$ (i.e., $\Delta$ is not of type $\GR{A}{n}$), then one has $(\tilde\ap,\tilde\theta)<0$. It then follows from~\cite[Prop.\,5.3]{imrn} that $\#\Ab_{\tilde\theta}=2$. In other words,
$\Ab_{\tilde\theta}=\{I(\tilde\theta)_{\min}, I(\tilde\theta)_{\max}\}$ and $\# I(\tilde\theta)_{\max}=
\# I(\tilde\theta)_{\max}+1$. Furthermore, $I(\tilde\theta)_{\max}\subset I\langle{\curge}\tilde\theta\rangle$
and $I(\tilde\theta)_{\max}\not\subset\gH$. The only possibility is 
$I(\tilde\theta)_{\max}= I\langle{\curge}\tilde\theta\rangle$ (and hence $I(\tilde\theta)_{\min}=
I\langle{\curge}\tilde\theta\rangle \setminus\{\tilde\theta\}$). Thus, $\tilde\theta$ is amazing.
\\ \indent
\textbullet\quad For $\Delta$ of type $\GR{A}{n}$, we have $\Pi\cap\gH=\{\ap_1,\ap_n\}$. However,
$(\tilde\theta,\ap_1)=(\tilde\theta,\ap_n)<0$, and we still can conclude that $\#\Ab_{\tilde\theta}=2$.
The rest is the same.
\end{proof}

\begin{rema}
It can happen that a highest root in $\Delta^+\cap\theta^\perp$ is short, see $\GR{B}{3}$ or $\GR{G}{2}$.
For $\GR{D}{n}$ ($n\ge 5$) or $\GR{B}{n}$ ($n\ge 4$),
there are two long highest roots in $\Delta^+\cap\theta^\perp$.
\end{rema}

%%%%%%%%%%%%%%%%%%%%%%%%%%%%%%%%%
\section{Main results}   \label{sect:3}

\noindent
Using the above properties of amazing roots, one can obtain the full list of them in every root systems.
The numbering of simple roots follows \cite[Table\,1]{t41}. If $\gamma=\sum_{i=1}^n c_i\ap_i\in \Delta^+$, 
then we write $\gamma=[c_1\dots c_n]$. In the classical algebras, we also use the standard $\{\esi_i\}$ notation.  %E.g. the simple roots of $\GR{D}{n}$ are .

\begin{thm}   \label{thm:classif}
The amazing roots in the irreducible root systems are:
 \begin{itemize}
\item[\sf (1)] \ $\gG(\GR{A}{n})=\Delta^+=\{ \esi_i-\esi_j\mid 1\le i < j\le n+1\}$; 
\item[\sf (2)] \ $\gG(\GR{B}{n})=\{\esi_1-\esi_2;\ \esi_1+\esi_j \, (j=2,\dots,n);\ \esi_i+\esi_{i+1} \,
(i=2,\dots,n-1)  \}$; 
\item[\sf (3)] \ $\gG(\GR{C}{n})=\Delta^+_l=\{2\esi_1,\dots,2\esi_n\}$; 
\item[\sf (4)] \ $\gG(\GR{D}{n})=\{\esi_1-\esi_2, \esi_1-\esi_n,\,\esi_1+\esi_j \, (j=2,\dots,n),\, 
\esi_i+\esi_{i+1}\, (i=2,\dots,n-1), \esi_{n-1}-\esi_n  \}$; 
\item[\sf (5)] \ $\gG(\GR{E}{6})=\Bigl\{\boldsymbol{\ap_1},\boldsymbol{\ap_5}, 
\emph{\begin{E6}{1}{1}{1}{1}{1}{0}\end{E6}},
\emph{\bf \begin{E6}{1}{2}{2}{1}{0}{1}\end{E6}}, 
\emph{\bf \begin{E6}{0}{1}{2}{2}{1}{1}\end{E6}},
\emph{\begin{E6}{1}{1}{2}{2}{1}{1}\end{E6}}, 
\emph{\begin{E6}{1}{2}{2}{1}{1}{1}\end{E6}},
\emph{\begin{E6}{1}{2}{2}{2}{1}{1}\end{E6}}, \\  \phantom{\text{\begin{E7}{1}{1}{2}{2}{2}{1}{1}\end{E7}} }
%(122101), (012211), 
\emph{\bf \begin{E6}{1}{2}{3}{2}{1}{1}\end{E6}}, 
\emph{\bf \begin{E6}{1}{2}{3}{2}{1}{2}\end{E6}}=\theta \Bigr\}$;

\item[\sf (6)] \ $\gG(\GR{E}{7})=\Bigl\{\boldsymbol{\ap_1},   
\emph{\bf \begin{E7}{1}{2}{2}{2}{1}{0}{1}\end{E7}},
\emph{\bf \begin{E7}{0}{1}{2}{3}{2}{1}{2}\end{E7}},
\emph{\begin{E7}{1}{1}{2}{3}{2}{1}{2}\end{E7}},
\emph{\bf \begin{E7}{1}{2}{3}{3}{2}{1}{1}\end{E7}},
\emph{\begin{E7}{1}{2}{2}{3}{2}{1}{2}\end{E7}}, 
\emph{\begin{E7}{1}{2}{3}{3}{2}{1}{2}\end{E7}}, \\ \phantom{\text{\begin{E7}{1}{1}{2}{2}{2}{1}{1}\end{E7}} }
\emph{\bf \begin{E7}{1}{2}{3}{4}{2}{1}{2}\end{E7}},
\emph{\bf \begin{E7}{1}{2}{3}{4}{3}{1}{2}\end{E7}},
\emph{\bf \begin{E7}{1}{2}{3}{4}{3}{2}{2}\end{E7}}=\theta
 \Bigr\}$;

\item[\sf (7)] \ $\gG(\GR{E}{8})=\Bigl\{
\emph{\bf \begin{E8}{0}{1}{2}{3}{4}{3}{2}{2}\end{E8}},
\emph{\bf \begin{E8}{1}{2}{3}{4}{5}{3}{1}{3}\end{E8}},
\emph{\bf \begin{E8}{1}{2}{3}{4}{5}{4}{2}{2}\end{E8}},
\emph{\begin{E8}{1}{2}{3}{4}{5}{3}{2}{3}\end{E8}},
\emph{\begin{E8}{1}{2}{3}{4}{5}{4}{2}{3}\end{E8}},
%\emph{\begin{E8}{1}{2}{3}{4}{6}{4}{2}{3}\end{E8}}, 
\\ \phantom{\text{\begin{E8}{0}{1}{1}{2}{2}{2}{1}{1}\end{E8}} }
\emph{\bf \begin{E8}{1}{2}{3}{4}{6}{4}{2}{3}\end{E8}},
\emph{\bf \begin{E8}{1}{2}{3}{5}{6}{4}{2}{3}\end{E8}},
\emph{\bf \begin{E8}{1}{2}{4}{5}{6}{4}{2}{3}\end{E8}},
\emph{\bf \begin{E8}{1}{3}{4}{5}{6}{4}{2}{3}\end{E8}},
\emph{\bf \begin{E8}{2}{3}{4}{5}{6}{4}{2}{3}\end{E8}}=\theta
 \Bigr\}$;
\item[\sf (8)] \ $\gG(\GR{F}{4})=\{\emph{\text{\small [2210], [2421], [2431],[2432]}}=\theta\}$;
\item[\sf (9)] \ $\gG(\GR{G}{2})=\{ \emph{\text{\small [31], [32]}}=\theta\}$.
\end{itemize}
\end{thm}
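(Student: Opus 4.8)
The plan is to verify the displayed list type-by-type, exploiting the remark after Definition~\ref{def:glor} that $\gG\subseteq\Delta^+_{\sf com}$: this means I only need to test the \emph{commutative} long roots, whose list for each irreducible $\Delta$ is available from~\cite[Theorem\,4.4]{jac06}. For each candidate $\gamma$ I decide membership in $\gG$ by splitting according to whether $\gamma$ lies in the Heisenberg set $\gH$ or in its complement $\Delta^+\cap\theta^\perp=\Delta^+\setminus\gH$. Two further shortcuts trim the work at the outset: the simple roots $\ap$ with $[\theta:\ap]=1$ are amazing by Lemma~\ref{lm:simple-glor} (accounting for the bold ``simple'' entries such as $\ap_1,\ap_5$ in $\GR{E}{6}$, and $\esi_1-\esi_2,\esi_{n-1}-\esi_n$ in $\GR{D}{n}$), and $\theta$ itself is always amazing.

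For $\gamma\in\gH\cap\Delta^+_l$ I apply Proposition~\ref{prop:gamma-in-H}: such a $\gamma$ is amazing precisely when the interval $[\gamma,\theta]$ in $(\Delta^+,\curge)$ reduces to a single chain all of whose roots are long. In the classical types this is transparent in the $\{\esi_i\}$-coordinates — e.g. in $\GR{B}{n}$ and $\GR{D}{n}$ the roots $\esi_1\pm\esi_j$ admit a unique ascent to $\theta=\esi_1+\esi_2$ obtained by lowering the second index, whereas a root such as $\esi_2+\esi_j$ with $j\ge 4$ already fails because (as in Example~\ref{ex:non-glor-Dn}(1)) its ascent is non-unique. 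In the exceptional types the same condition is read off the explicit root list. This step produces exactly the $\gH$-part of each displayed set.

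For $\gamma\in\Delta^+_l$ with $(\gamma,\theta)=0$ I use the sharp counting criterion of Proposition~\ref{prop:polnaya-xar}: $\gamma$ is amazing iff $\#\bigl(I\langle{\curge}\gamma\rangle\cap\gH\bigr)=\hot(\theta^\vee)-\hot(\gamma^\vee)+1$. The top of this family is handled at once by Proposition~\ref{prop:highest-in-dopoln}, which places the long highest root(s) $\tilde\theta$ of $\Delta^+\cap\theta^\perp$ in $\gG$; descending through the nested orthogonal subsystems then yields the ``adjacent'' chains $\esi_i+\esi_{i+1}$ in $\GR{B}{n},\GR{D}{n}$ and $2\esi_i$ in $\GR{C}{n}$ (where $\theta=2\esi_1$), each confirmed by the height computation carried out in Example~\ref{ex:non-glor-Dn}(2). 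Commutative roots that must be excluded are killed by a strict inequality in~\eqref{eq:polnaya-xar}, most conveniently exhibited via Lemma~\ref{lm:elem-extens} by an abelian enlargement $I\langle{\curge}\gamma\rangle\cup\{\nu\}$ with $\nu\notin\gH$, exactly as $\esi_3+\esi_4$ disqualifies $\esi_2+\esi_j$ in Example~\ref{ex:non-glor-Dn}(1). The simply-laced case $\GR{A}{n}$ drops out immediately: every principal upper ideal $I\langle{\curge}\gamma\rangle$ is abelian, so $\Delta^+_{\sf com}=\Delta^+$, the unique-path condition holds automatically for $\gamma\in\gH$, and for $\gamma=\esi_i-\esi_j\notin\gH$ the two sides of~\eqref{eq:polnaya-xar} both equal $n-(j-i)+1$; hence $\gG(\GR{A}{n})=\Delta^+$.

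The genuine labour is in $\GR{E}{6},\GR{E}{7},\GR{E}{8}$ and — because of the mixing of long and short roots — in $\GR{B}{n},\GR{F}{4},\GR{G}{2}$; here I expect the main obstacle to be the sheer bookkeeping for $\GR{E}{8}$, where the number of commutative roots is large and each path/height check must be done explicitly. To tame this I would lean on closure under `$\vee$' (Lemma~\ref{lm:1}): once a small generating set of amazing roots has been confirmed by the two criteria above, every pairwise join is automatically amazing, so only the finitely many remaining commutative roots need to be separately ruled out by the strict inequality in~\eqref{eq:polnaya-xar}. Matching the resulting lists against the displayed tables then completes the classification in all nine cases.
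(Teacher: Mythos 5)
Your overall strategy coincides with the paper's own (sketched) proof: restrict to $\Delta^+_{\sf com}$, decide membership for roots in $\gH$ via the unique-path criterion of Proposition~\ref{prop:gamma-in-H}, and handle $\Delta^+_{\sf com}\setminus\gH$ with Proposition~\ref{prop:polnaya-xar}, Lemma~\ref{lm:elem-extens} and Proposition~\ref{prop:highest-in-dopoln}, using Lemma~\ref{lm:1} to cut down the case-by-case work. In outline there is nothing to object to; the paper does exactly this.

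However, the one classical-type verification you actually spell out is wrong, and wrong in a way that would corrupt the very lists you are supposed to confirm. You claim that in $\GR{B}{n}$ and $\GR{D}{n}$ ``the roots $\esi_1\pm\esi_j$ admit a unique ascent to $\theta=\esi_1+\esi_2$ obtained by lowering the second index.'' For the $+$ sign this is true, but for the $-$ sign it fails. In $\GR{D}{n}$ the ascent from $\esi_1-\esi_j$ ($3\le j\le n-1$) branches at $\esi_1-\esi_{n-1}$, which has the two covers $\esi_1-\esi_n$ and $\esi_1+\esi_n$; so the path is not unique and these roots are not amazing (only $\esi_1-\esi_n$ survives). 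In $\GR{B}{n}$ the ascent from $\esi_1-\esi_j$ ($j\ge 3$) is indeed unique, but it passes through the \emph{short} root $\esi_1$, so condition {\sf (ii)} of Proposition~\ref{prop:gamma-in-H} still fails and none of these roots is amazing. This is consistent with the statement of Theorem~\ref{thm:classif} --- the $\GR{B}{n}$ list contains no $\esi_1-\esi_j$ with $j\ge3$, and the $\GR{D}{n}$ list contains only $\esi_1-\esi_n$ --- but it contradicts your verification: taken at face value, your argument would place all $\esi_1-\esi_j$ in $\gG(\GR{B}{n})$ and $\gG(\GR{D}{n})$. Since the entire content of this classification theorem is the correctness of such case-by-case checks, this is a genuine flaw rather than a typo: Proposition~\ref{prop:gamma-in-H} must be applied in full, checking \emph{both} the uniqueness of the path \emph{and} that every root on it is long (the latter is exactly what distinguishes $\GR{B}{n}$ from $\GR{D}{n}$ here). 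The rest of your plan, including the A${}_n$ computation and the use of joins to tame $\GR{E}{8}$, is sound.
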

\begin{proof}
The $\GR{A}{n}$-case  is simple, because here $\Pi\cap \Delta^+_{\sf com}=\Pi\subset\gG$. Hence
$\Delta^+\subset\gG$ in view of Lemma~\ref{lm:1}.  For $\GR{C}{n}$, there is a few long roots, and a straightforward calculation is
easy. For all other types, we first determine $\gG_\gH$ using Proposition~\ref{prop:gamma-in-H}.
Then we use Lemma~\ref{lm:elem-extens} as a necessary condition for $\gamma\in \Delta^+_{\sf com}\setminus\gH$ to be amazing. This appears to be sufficient for detecting all non-amazing roots in $\GR{B}{n}$ and $\GR{D}{n}$. We also use Proposition~\ref{prop:polnaya-xar}
for a quick verification that a root $\gamma$ is amazing, cf. Example~\ref{ex:non-glor-Dn}(2).
\end{proof}

Using our classification, we make a surprising observation.
\begin{thm}   \label{thm:bij1}
For any irreducible root system $\Delta$, 
\begin{itemize}
\item[\sf (i)] \ the number of primitive roots equals $\#\Pi$;
\item[\sf (ii)] \ there is a natural bijection $\gG_\mathsf{pr}\stackrel{1:1}{\longrightarrow} \Pi$. This bijection 
takes $\gamma\in \gG_\mathsf{pr}$ to the unique $\ap\in\Pi$ such that 
$\gamma-\ap\in \Delta^+\cup\{0\}$.
\end{itemize}
\end{thm}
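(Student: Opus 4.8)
The plan is to establish the explicit bijection (ii); statement (i) is then just the resulting count $\#\gG_\mathsf{pr}=\#\Pi$. Denote by $\kappa$ the proposed assignment $\gamma\mapsto\ap$. Strictly speaking $\kappa$ makes sense only on \emph{inconvenient} roots, where $\ap$ is unique, so the first point is that every primitive root is inconvenient. I would begin from the observation that $\gamma$ is inconvenient precisely when it covers a single element of $(\Delta^+\cup\{0\},\curle)$: if $\gamma-\ap,\gamma-\beta\in\Delta^+\cup\{0\}$ for two distinct simple roots, then their supports overlap, so $(\diamond_2)$ gives $(\gamma-\ap)\vee(\gamma-\beta)=\gamma$, realizing $\gamma$ as a join of two incomparable positive roots; conversely, if $\gamma$ covers the single element $\gamma-\ap$, then every proper $\mu\curle\gamma$ satisfies $\mu\curle\gamma-\ap$, so no join of incomparable proper roots can recover $\gamma$. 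Granting this, and since (amazing)\,\&\,(inconvenient)$\Rightarrow$(primitive) is built into the definitions, what remains is the reverse implication for amazing roots together with the bijectivity of $\kappa$, both of which I would extract from the classification in Theorem~\ref{thm:classif}.

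By Lemma~\ref{lm:1} the pair $(\gG,\vee)$ is a join-semilattice, and ``primitive'' is synonymous with ``join-irreducible''. For each type I would therefore use the join rules $(\diamond_1)$, $(\diamond_2)$ to recover the order on the finite set $\gG$ and then isolate its join-irreducible elements. One uniform input shortens the work: by Lemma~\ref{lm:simple-glor} every amazing simple root is inconvenient, hence primitive, and is fixed by $\kappa$; these account for the simple roots $\ap$ with $[\theta:\ap]=1$. The classical types are then light. In $\GR{A}{n}$ one has $\gG=\Delta^+$ ordered by interval inclusion, the join-irreducibles are exactly the simple roots, and $\kappa=\mathrm{id}$. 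In $\GR{C}{n}$ the chain $2\esi_1\curge\cdots\curge 2\esi_n$ is entirely join-irreducible, with $2\esi_i$ covering $2\esi_{i+1}$ through $\ap_i$ (and $2\esi_n$ through $\ap_n$), so $\kappa(2\esi_i)=\ap_i$. For $\GR{B}{n}$ and $\GR{D}{n}$ I would check that the join-irreducibles are $\theta$, the amazing simple roots, and the consecutive roots $\esi_i+\esi_{i+1}$; each covers a single element, $\kappa$ sends them to pairwise distinct simple roots, and the count comes out to $n$ in both cases.

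The real work, and the main obstacle, lies in the exceptional types $\GR{E}{6},\GR{E}{7},\GR{E}{8}$ (the cases $\GR{F}{4}$ and $\GR{G}{2}$ collapse to short chains, where every element is join-irreducible). Here the amazing roots appear as explicit coordinate vectors, and I would (a) compute all pairwise joins by coordinatewise maximum, via $(\diamond_2)$, to decide which listed roots are join-irreducible; (b) verify that each primitive root covers exactly one element of $\Delta^+\cup\{0\}$—this is what makes $\kappa$ well defined on $\gG_\mathsf{pr}$ and, simultaneously, yields the \emph{a posteriori} equivalence primitive $\Leftrightarrow$ inconvenient; and (c) record the attached simple roots and confirm that $\kappa$ is a bijection onto $\Pi$. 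The delicate steps are entirely combinatorial: excluding hidden joins, so that no listed root is mistakenly called primitive, and ruling out a primitive root lying above two distinct simple roots. These coordinate checks are heaviest in $\GR{E}{8}$, where exactly two of the ten amazing roots arise as joins and the remaining eight must be matched bijectively with the eight simple roots. Once the per-type tables are assembled, injectivity and surjectivity of $\kappa$ are immediate, and (i) follows since $\#\gG_\mathsf{pr}=\#\Pi$.
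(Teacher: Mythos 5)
Your proposal takes essentially the same route as the paper: both proofs extract $\gG_{\sf pr}$ from the explicit lists of Theorem~\ref{thm:classif} by computing joins via $(\diamond_1)$, $(\diamond_2)$ type by type (with the same outcomes: $\gG_{\sf pr}=\gG$ for $\GR{C}{n},\GR{F}{4},\GR{G}{2}$, $\gG_{\sf pr}=\Pi$ for $\GR{A}{n}$, the boldface roots in the exceptional types), and then verify part {\sf (ii)} by checking that distinct primitive roots have distinct subtractable simple roots. One small slip, which does not affect the argument: if $\gamma-\ap$ and $\gamma-\beta$ both lie in $\Delta^+\cup\{0\}$ their supports need \emph{not} overlap (e.g.\ $\gamma=\ap+\beta$ with $\ap,\beta$ adjacent simple roots), but in that exceptional case $(\diamond_1)$ still gives $\ap\vee\beta=\gamma$, so your equivalence between ``inconvenient'' and ``not a join of incomparable positive roots'' (which the paper simply asserts in its definition) remains valid.
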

\begin{proof}
(i) Using the explicit formula for `$\vee$',
it is not hard to single out the primitive roots from the lists in Theorem~\ref{thm:classif}. In particular,
$\gG_{\sf pr}=\gG$ for $\GR{C}{n}, \GR{F}{4}, \GR{G}{2}$ and 
$\gG_{\sf pr}=\Pi$ for $\GR{A}{n}$.

For $\GR{D}{n}$, we obtain $\gG_{\sf pr}=\{\ap_1, \ap_{n-1}, \ap_n,
\esi_i+\esi_{i+1}\ (i=1,\dots,n-3)\}$. 
\\
For instance, set $\gamma_i:=\esi_i+\esi_{i+1}=
\ap_i+2(\ap_{i+1}+\dots+\ap_{n-2})+ \ap_{n-1}+ \ap_{n}$ $(i=1,\dots,n-3)$.  If $i\ge 2$, then $\ap_1$ and 
$\gamma_i$ are incomparable  and hence 
\[
\text{$\ap_1\vee\gamma_i=\ap_1+\dots+\ap_i+2(\ap_{i+1}+\dots+\ap_{n-2})+ \ap_{n-1}+ \ap_{n}=\esi_1+\esi_{i+1}\not\in\gG_{\sf pr}$ for $i\ge 2$.} 
\]
Also $\ap_1\vee\ap_{n-1}=\ap_1+\dots+\ap_{n-1}=\esi_1-\esi_n$, 
$\ap_1\vee\ap_{n}=\ap_1+\dots+\ap_{n-2}+\ap_n=\esi_1+\esi_n$, and $\ap_{n-1}\vee\ap_n=
\ap_{n-2}+\ap_{n-1}+\ap_n=\esi_{n-2}+\esi_{n-1}$. This discards all non-primitive roots.

For $\GR{B}{n}$, we similarly obtain $\gG_{\sf pr}=\{\ap_1=\esi_1-\esi_2, \esi_i+\esi_{i+1}\ (i=1,\dots,n-1)\}$.

For $\GR{E}{n}$, the primitive roots are presented in {\bf bold} in Theorem~\ref{thm:classif}. 

\noindent
(ii) One readily verifies that the simple roots that can be subtracted 
from different primitive roots are also different. For instance, for $\GR{D}{n}$, the only simple root that can be subtracted from $\gamma_i$ is $\ap_{i+1}$ $(i=1,\dots,n-3)$.
\end{proof}

\begin{rema} If $\Delta$ is not of type $\GR{A}{n}$, $n\ge 2$, then $\theta$ is primitive.
\end{rema}
Recall that $\gG_\gH=\gG\cap\gH$.

\begin{thm}   \label{thm:bij2}
If $\theta$ is fundamental (i.e., $\Delta$ is not of type $\GR{A}{n}$ or $\GR{C}{n}$), then
\begin{itemize}
\item[\sf (i)] \ $\#\gG_\gH=\#\Pi_l+1$; 
\item[\sf (ii)] \ $\gG_\gH$ is a subposet of $\Delta^+$ and its Hasse diagram is a tree that is isomorphic 
to the sub-diagram of
the extended Dynkin diagram, where the nodes corresponding to the short roots are omitted;
there is a natural bijection between the edges of the tree $\gG_\gH$ and $\Pi_l$.
 \item[\sf (iii)] \ in the non-simply laced cases $\GR{B}{n}$,  $\GR{F}{4}$, and  $\GR{G}{2}$, 
one can add to $\gG_\gH$ some \emph{\/short commutative} roots so that the resulting subposet, 
say $\widetilde{\gG_\gH}$, represents the whole extended Dynkin diagram; there is a natural bijection 
between the edges of the tree $\widetilde{\gG_\gH}$ and $\Pi$.
 \item[\sf (iv)] \ In all cases, the vertex $\theta\in \gG_\gH$ represents the extra node of the extended Dynkin diagram.
\end{itemize}
\end{thm}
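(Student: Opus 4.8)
The plan is to read $\gG_\gH$ off the classification of Theorem~\ref{thm:classif} and to compute the induced order in each type; all four assertions concern the explicit lists, so the proof is a type-by-type verification organized around a single uniform feature. I would begin by noting that \textsf{(i)} is a formal consequence of \textsf{(ii)}: the extended Dynkin diagram has $\rk\g+1$ nodes, of which exactly the $\#\Pi-\#\Pi_l$ short simple roots are deleted (the affine node $-\theta$ being long), leaving $\#\Pi_l+1$ vertices; a tree on that many vertices is precisely what \textsf{(ii)} claims $\gG_\gH$ to be, so \textsf{(i)} follows by counting vertices. For the simply-laced types $\GR{D}{n},\GR{E}{6},\GR{E}{7},\GR{E}{8}$ one has $\Pi_l=\Pi$, and $\gG_\gH$ already realizes the full extended diagram, so \textsf{(iii)} concerns only $\GR{B}{n},\GR{F}{4},\GR{G}{2}$.

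First I would split the amazing roots of Theorem~\ref{thm:classif} into those lying in $\gH$ and those in $\Delta^+\setminus\gH=\Delta^+\cap\theta^\perp$. Since $\theta$ is fundamental, a single simple root $\ap^*$ satisfies $(\ap^*,\theta)\neq0$, so $(\gamma,\theta)\neq0$ reduces to $[\gamma:\ap^*]\neq0$ and is read off at once from the $\{\esi_i\}$-coordinates (classical types) or the coordinate vectors (exceptional types). For instance, in $\GR{D}{n}$ one finds $\gG_\gH=\{\esi_1-\esi_n\}\cup\{\esi_1+\esi_j : 2\le j\le n\}\cup\{\esi_2+\esi_3\}$, of cardinality $n+1=\#\Pi_l+1$; the simple root $\esi_1-\esi_2$ and the roots $\esi_i+\esi_{i+1}$ with $i\ge3$ drop out, being orthogonal to $\theta$. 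As a rule the discarded roots are the highest roots of $\theta^\perp$, which are amazing by Proposition~\ref{prop:highest-in-dopoln} but lie outside $\gH$.

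Next I would compute the covering relations of the induced subposet. The uniform fact to be confirmed against each list is that every cover $\gamma\curle\gamma'$ of $\gG_\gH$ satisfies $\gamma'-\gamma\in\Pi_l$; this is coherent with Proposition~\ref{prop:gamma-in-H}, by which each $\gamma\in\gG_\gH$ has a unique path to $\theta$ through long roots, so that the chains running upward from $\gamma$ are determined. Granting it, the Hasse diagram is a tree with $\#\Pi_l$ edges carrying pairwise distinct long-simple-root labels, and one checks in each type that these labels exhaust $\Pi_l$, that the tree is isomorphic to the extended Dynkin diagram with its short nodes removed, and that the maximum $\theta$ occupies the affine node; this settles \textsf{(ii)} and \textsf{(iv)}, the labels giving the bijection with $\Pi_l$. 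In $\GR{D}{n}$, say, $\gG_\gH$ is the chain $\theta=\esi_1+\esi_2\curge\esi_1+\esi_3\curge\dots\curge\esi_1+\esi_{n-1}$, forking at the top into $\esi_2+\esi_3$ (label $\ap_1$) and at the bottom into $\esi_1+\esi_n$ and $\esi_1-\esi_n$, which is exactly $\GRt{D}{n}$.

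Finally, for \textsf{(iii)} I would adjoin the missing short roots: in $\GR{B}{n}$ the short root $\esi_1$, hung below $\esi_1+\esi_n$ across the short simple root, and in $\GR{F}{4},\GR{G}{2}$ the analogous short roots prolonging the chain toward the short end. Each is commutative since its principal upper set is abelian --- for $\GR{B}{n}$, $I\langle\curge\esi_1\rangle=\{\esi_1\}\cup\{\esi_1+\esi_j : 2\le j\le n\}$ contains no two roots summing to a root --- and the new covers realize precisely the short simple roots, so the edges of $\widetilde{\gG_\gH}$ biject with all of $\Pi$. The genuinely laborious part, and the only real obstacle, is the exceptional series $\GR{E}{6},\GR{E}{7},\GR{E}{8}$ and $\GR{F}{4}$: there one must locate $\gG_\gH$ within the lists of Theorem~\ref{thm:classif} and verify by direct inspection of the coordinate vectors that the covering relations assemble into the correct extended diagram, with no shortcut beyond the path criterion of Proposition~\ref{prop:gamma-in-H}.
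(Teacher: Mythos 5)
Your proposal is correct, but it takes a genuinely different and more laborious route than the paper. You derive everything from the classification in Theorem~\ref{thm:classif}: extract $\gG_\gH$ type by type (via $(\gamma,\theta)\ne 0\Leftrightarrow[\gamma:\ap^*]\ne 0$, where $\ap^*$ is the unique simple root not orthogonal to $\theta$), compute the covering relations of the induced subposet, and match the resulting labelled trees against the extended Dynkin diagrams by inspection, with {\sf (i)} and {\sf (iv)} following by counting and by locating $\theta$. The paper instead proves {\sf (i)}, {\sf (ii)}, {\sf (iv)} uniformly, without invoking the classification at all: it sets up a bijection between $\gG_\gH$ and the chains in the extended Dynkin diagram $\tilde{\eus D}$ starting at the affine node $\ap_0$ (in types {\sf B}, {\sf F}, {\sf G}, chains through long nodes only). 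Given such a chain $\ap_0,\ap_{i_1},\dots,\ap_{i_m}$, the root $\gamma=\theta-\sum_j\ap_{i_j}$ lies in $\gH$ and $\gamma,\ \gamma+\ap_{i_m},\ \gamma+\ap_{i_m}+\ap_{i_{m-1}},\dots$ is its unique path to $\theta$, so $\gamma\in\gG_\gH$ by Proposition~\ref{prop:gamma-in-H}; conversely, uniqueness of the path attached to any $\gamma\in\gG_\gH$ forces consecutive labels to satisfy $(\ap_{j_i},\ap_{j_{i+1}})<0$, so the path traces out a chain from $\ap_0$. Since in a tree the chains from $\ap_0$ correspond to their endpoints, this gives at once the count $\#\Pi_l+1$, the tree structure, the edge labelling by $\Pi_l$, and the identification of $\theta$ with the affine node. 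In particular, the ``shortcut'' you claim does not exist for $\GR{E}{n}$ and $\GR{F}{4}$ does exist: only part {\sf (iii)} (adjoining short commutative roots) is handled case by case in the paper, exactly as you do it. Two harmless slips in your write-up: in $\GR{D}{n}$ the root $\esi_{n-1}-\esi_n$ also drops out of $\gH$ (your displayed list for $\gG_\gH$ is nonetheless correct), and the discarded roots are not ``as a rule'' the highest roots of $\Delta^+\cap\theta^\perp$ --- e.g.\ $\esi_4+\esi_5$ in $\GR{D}{n}$, $n\ge 5$, is discarded but is not a highest root of that subsystem.
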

\begin{proof} 
Let $\tilde{\eus D}$ be the extended Dynkin diagram of $\Delta$, where the extra node is denoted by 
$\ap_0$. It is a tree, since type $\GR{A}{n}$ is excluded.
\\ \indent
{\sf (1)} \ In the simply-laced case, all assertions follow from the fact that there is a natural bijection 
between $\gG_\gH$ and the chains in $\tilde{\eus D}$ starting at $\ap_0$. 
\\ \indent
\textbullet \ \ Let $\ap_0,\ap_{i_1},\dots,\ap_{i_m}$ be the unique chain connecting $\ap_0$ and 
$\ap_{i_m}$ in $\tilde{\eus D}$. The corresponding root is defined to be $\gamma=\theta-\sum_{j=1}^m\ap_{i_j}$. Then $\gamma\in\gH$, because $(\theta,\ap_{i_j})=0$ for $j\ge 2$, and it is easily seen that
$\gamma, \gamma+\ap_{i_m}, \gamma+\ap_{i_m}+\ap_{i_{m-1}},\dots$ is the unique path from 
$\gamma$ to $\theta$. Hence $\gamma$ is amazing in view of Prop.~\ref{prop:gamma-in-H}.
\\ \indent
\textbullet \ \ Conversely, suppose that $\gamma\in\gG_\gH$ and let 
$\gamma=\gamma_0,\gamma_1,\dots,\gamma_m=\theta$ be the unique path from $\gamma$ to 
$\theta$, with $\gamma_i-\gamma_{i-1}=\ap_{j_i}\in\Pi$.
Then the uniqueness implies that $(\ap_{j_i},\ap_{j_{i+1}})<0$ and $\ap_0,\ap_{j_m},\dots,\ap_{j_1}$ is a
chain in $\tilde{\eus D}$.
Let us explain the first step. We have $\gamma_1=\gamma+\ap_{j_1}\in\Delta^+$, hence
$(\gamma, \ap_{j_1})<0$. Since $\gamma+\ap_{j_2}\not\in\Delta$ (uniqueness!), 
$(\gamma, \ap_{j_2})\ge 0$. Next, $\gamma_2=(\gamma+\ap_{j_1})+\ap_{j_2}\in\Delta^+$. 
Then $(\gamma+\ap_{j_1}, \ap_{j_2})<0$  and hence $(\ap_{j_1}, \ap_{j_2})<0$.
\\ \indent
{\sf (2)} \ The above argument can be carried over to $\gG_\gH$ in the {\sf B-F-G} case. 
As for $\widetilde{\gG_\gH}$, we resort to case-by-case verifications (cf. figures below).
\end{proof}

\noindent
It follows that, for $\GR{D}{n}$ and  $\GR{E}{n}$, one immediately obtains the whole extended Dynkin diagram. We provide in Fig.~\ref{fig:Dn} the respective configuration for $\GR{D}{n}$. The numbers on the edges indicate the corresponding simple roots.

\begin{figure}[htb]
\begin{center}
\setlength{\unitlength}{0.02in}
\begin{picture}(190,42)(-10,-10)

\put(-45,6){\small $\gG_\gH(\GR{D}{n})$:}
%%  левая часть
\multiput(30,8)(25,0){2}{\circle{5}}
\multiput(33,8)(25,0){2}{\line(1,0){19}}
% усы слева
\multiput(5,-5)(0,26){2}{\circle{5}}
\put(8,11){\oval(43,20)[rt]}
\put(8,5){\oval(43,20)[rb]}
% надписи слева
\put(-7,27){\footnotesize $\esi_1{-}\esi_n$}
\put(1,6.5){\footnotesize $\esi_1{+}\esi_{n{-}1}$}
\put(-7,-15){\footnotesize $\esi_1{+}\esi_{n}$}
\put(50,15){\footnotesize $\esi_1{+}\esi_{n{-}2}$}

%  middle dots
\put(82,6){$\cdots$}
%% правая часть
\multiput(120,8)(25,0){2}{\circle{5}}
\multiput(98,8)(25,0){2}{\line(1,0){19}}

% усы справа
\multiput(120,-5)(50,26){2}{\circle{5}}
\put(167,11){\oval(43,20)[lt]}
\put(123,5){\oval(43,20)[rb]}

% надписи справа
\put(108,15){\footnotesize $\esi_1{+}\esi_{4}$}
\put(168,27){\footnotesize $\esi_1{+}\esi_2=\theta$}
\put(150,6.5){\footnotesize $\esi_1{+}\esi_{3}$}
\put(118,-15){\footnotesize $\esi_2{+}\esi_{3}$}

%  надписи на стрелках
{\color{blue}
\put(18,22){\footnotesize $n$}
\put(18,-11){\footnotesize $n{-}1$}
\put(36,2){\footnotesize $n{-}2$}
\put(131,2){\footnotesize $3$}
\put(150,22){\footnotesize $2$}
\put(140,-11){\footnotesize $1$}
}
\end{picture}
\caption{The subposet $\gG_\gH$ in $\Delta^+(\sone)$}
\label{fig:Dn}
\end{center}
\end{figure}
 
\noindent
We leave it to the reader to determine the subposet $\gG_\gH$ in case of 
$\GR{E}{n}$. For $\GR{B}{n}$ and $\GR{F}{4}$, the extended Dynkin diagram $\widetilde{\gG_\gH}$ sitting inside the set of commutative roots is shown in
Fig.~\ref{fig:Bn} and ~\ref{fig:F4}, respectively. 

\begin{figure}[htb]
\begin{center}
\setlength{\unitlength}{0.02in}
\begin{picture}(190,40)(-10,-10)
\put(-45,6){\small $\widetilde{\gG_\gH}(\GR{B}{n})$:}
%%  левая часть
\put(5,8){\circle*{5}}
\multiput(30,8)(25,0){2}{\circle{5}}
\multiput(33,8)(25,0){2}{\line(1,0){19}}
\multiput(9.8,7.1)(0,2){2}{\line(1,0){17.5}}
\put(7,6){$<$}

% надписи слева
\put(2,15){\footnotesize $\esi_1$}
\put(21,15){\footnotesize $\esi_1{+}\esi_{n}$}
\put(50,15){\footnotesize $\esi_1{+}\esi_{n{-}1}$}

%  middle dots
\put(82,6){$\cdots$}
%% правая часть
\multiput(120,8)(25,0){2}{\circle{5}}
\multiput(98,8)(25,0){2}{\line(1,0){19}}

% усы справа
\multiput(120,-5)(50,26){2}{\circle{5}}
\put(167,11){\oval(43,20)[lt]}
\put(123,5){\oval(43,20)[rb]}

% надписи справа
\put(108,15){\footnotesize $\esi_1{+}\esi_{4}$}
\put(168,27){\footnotesize $\esi_1{+}\esi_2=\theta$}
\put(150,6.5){\footnotesize $\esi_1{+}\esi_{3}$}
\put(118,-15){\footnotesize $\esi_2{+}\esi_{3}$}

%  надписи на стрелках
{\color{blue}
\put(16.5,2){\footnotesize $n$}
%\put(18,-11){\footnotesize $n{-}1$}
\put(36,2){\footnotesize $n{-}1$}
\put(131,2){\footnotesize $3$}
\put(150,22){\footnotesize $2$}
\put(140,-11){\footnotesize $1$}
}
\end{picture}
\caption{The subposets $\gG_\gH$ and $\widetilde{\gG_\gH}$ in $\Delta^+(\sono)$}
\label{fig:Bn}
\end{center}
\end{figure}

\begin{figure}[htb]
\begin{center}
\setlength{\unitlength}{0.02in}
\begin{picture}(190,20)(-20,0)
\put(-55,6){\small $\widetilde{\gG_\gH}(\GR{F}{4})$:}

\multiput(5,8)(25,0){2}{\circle*{5}}
\multiput(55,8)(25,0){3}{\circle{5}}
\multiput(58,8)(25,0){2}{\line(1,0){19}}
\put(8,8){\line(1,0){19}}
\multiput(34.8,7.1)(0,2){2}{\line(1,0){17.5}}
\put(32,6){$<$}
% надписи слева
\put(-5,15){\footnotesize $[1321]$}
\put(20,15){\footnotesize $[2321]$}
\put(45,15){\footnotesize $[2421]$}
\put(70,15){\footnotesize $[2431]$}
\put(95,15){\footnotesize $[2432]=\theta$}

%  надписи на стрелках
{\color{blue}
\put(16,1){\footnotesize $1$}
\put(41,1){\footnotesize $2$}
\put(66,1){\footnotesize $3$}
\put(91,1){\footnotesize $4$}
%\put(150,-11){\footnotesize $1$}
}
\end{picture}
\caption{The subposets $\gG_\gH$ and $\widetilde{\gG_\gH}$ in $\Delta^+(\GR{F}{4})$}
\label{fig:F4}
\end{center}
\end{figure}

\begin{rmk}
For $\GR{A}{n}$, we have $\gG_\gH=\gH$, while $\gG_\gH=\{\theta\}$ for $\GR{C}{n}$. However, there
is no way to associate these subsets with extended Dynkin diagrams.
\end{rmk}
Finally, we gather the relevant numerical data.

\begin{center}
\begin{tabular}{>{$}c<{$}  | >{$}c<{$} >{$}c<{$} >{$}c<{$} >{$}c<{$} >{$}c<{$} >{$}c<{$} >{$}c<{$} >{$}c<{$} >{$}c<{$}| }
 & \GR{A}{n} & \GR{B}{n}, n\ge 3 & \GR{C}{n} & \GR{D}{n}, n\ge 4 & \GR{E}{6} & \GR{E}{7} & 
\GR{E}{8} & \GR{F}{4} & \GR{G}{2} \\[.5ex]  \hline 
\#\gG  & {n+1 \choose 2} & 2n-2 & n & 2n & 10 & 10 & 10 & 4 & 2
\\[.6ex]
\#\gG_\gH &  2n-1 & n & 1 & n+1 & 7 & 8 & 9 & 3 & 2 
\\
\end{tabular}
\end{center}
\vskip.7ex\noindent
It is curious that for the "exceptional series" $\GR{D}{5},  \GR{E}{6}, \GR{E}{7}, 
\GR{E}{8}$, the value $\#\gG$ is constant.


\begin{thebibliography}{Pa95}

\bibitem{bour}
{\sc N.~Bourbaki.}
"Groupes et alg\`ebres de Lie", Chapitres 4,5 et 6, Paris: Hermann 1975.

\bibitem{cp1} {\sc P.~Cellini} and {\sc P.~Papi}.
ad-nilpotent ideals of a Borel subalgebra, 
{\it J. Algebra},  {\bf 225}\,(2000), 130--141.

\bibitem{cp3} {\sc P.~Cellini} and {\sc P.~Papi}.
Abelian ideals of Borel subalgebras and affine Weyl groups, 
{\it Adv. Math.}, {\bf 187}\,(2004), 320--361.

\bibitem{t41}
{\sc V.V.\,Gorbatsevich, A.L.\,Onishchik} and {\sc E.B.\,Vinberg}. ``Lie Groups and 
Lie Algebras" III (Encyclopaedia Math. Sci., vol.~41) Berlin Heidelberg New York: 
Springer 1994.

\bibitem{ko98} {\sc B.~Kostant}. The set of abelian ideals of a Borel
subalgebra, Cartan decompositions, and discrete series representations,
{\it Intern. Math. Res. Notices,\/}  (1998),  no.\,5, 225--252.

\bibitem{ko04} {\sc B.~Kostant}. 
Powers of the Euler product and commutative subalgebras of a complex simple Lie algebra, 
{\it Invent. Math.}, {\bf 158} (2004), no.~1, 181--226.

\bibitem{imrn} {\sc D.~Panyushev}. 
Abelian ideals of a Borel subalgebra and long positive roots, 
{\it Intern. Math. Res. Notices,\/} (2003), no.\,35, 1889--1913.

\bibitem{jac06} {\sc D.~Panyushev}. The poset of positive roots and its
relatives, {\it J. Algebraic Combin.}, {\bf 23}\,(2006), 79--101.

\bibitem{jems}  {\sc D.~Panyushev}. 
Abelian ideals of a Borel subalgebra and root systems,
{\it J. Eur. Math. Soc.},  {\bf 16}, no.\,12~(2014), 2693--2708.

\bibitem{som05} {\sc E.~Sommers.} $B$-stable ideals in the nilradical of a
Borel subalgebra, {\it Canad. Math. Bull.}, {\bf 48}\,(2005),  460--472.

\bibitem{suter} {\sc R.~Suter}.
Abelian ideals in a Borel subalgebra of a complex simple Lie algebra, 
{\it Invent. Math.}, {\bf 156}\,(2004), 175--221.


\end{thebibliography}
\end{document}